\documentclass[a4paper,11pt,twoside,notitlepage,reqno]{amsart}

\usepackage{amsmath,amssymb,amsbsy,amsthm}
\usepackage[hmargin=1.4in,vmargin=1.4in]{geometry}
\usepackage{url}

\usepackage{tikz}
\usepackage{tikz-cd}    
\usepackage{amsmath}    
\usepackage{amssymb}    
\usepackage{amsthm}     
\usetikzlibrary{matrix}
\usepackage[linktocpage]{hyperref}
\hypersetup{
    colorlinks=true,        
    linkcolor=red,          
    citecolor=red,         
    filecolor=magenta,      
    urlcolor=cyan           
}

\addtocontents{toc}{\protect\setcounter{tocdepth}{1}}

\usepackage{eucal}

\usepackage{algorithm}
\usepackage{algpseudocode}

\theoremstyle{plain}
	\newtheorem{thm}{Theorem}

		\numberwithin{thm}{section}
	\newtheorem{lemma}[thm]{Lemma}
	\newtheorem{prop}[thm]{Proposition}

	\newtheorem*{thm*}{Theorem}
	\newtheorem*{lemma*}{Lemma}
	\newtheorem*{prop*}{Proposition}
	\newtheorem*{cor*}{Corollary}
	\newtheorem*{conj*}{Conjecture}

\theoremstyle{definition}
	
	\newtheorem*{example*}{Example}
	
	\newtheorem{remark}[thm]{Remark}

\begin{document}

\title[]
{Filtered deformations of three-variable polynomial algebras}

\author{Jason Bell}
\author{Boris Li}
\address{University of Waterloo \\
Department of Pure Mathematics \\
Waterloo, Ontario \\
Canada  N2L 3G1}
\thanks{The authors were supported in part by NSERC grant RGPIN-2022-02951.}
\email{jpbell@uwaterloo.ca}
\email{boris.li@uwaterloo.ca}
\keywords{filtered deformations, positive characteristic, polynomial identities.}
\subjclass{16S80, 16S38, 13A35}
\begin{abstract}
    We classify all filtered deformations of polynomial rings in three variables.  We use our characterization to answer a special case of a question of Etingof by showing that if $F$ is an algebraically closed field of positive characteristic and $A$ is an $F$-algebra with a filtration by finite dimensional subspaces whose associated graded ring is a polynomial ring in three variables then $A$ satisfies a polynomial identity.  \end{abstract}
\maketitle

\section{Introduction}
In this paper, we look at ``quantized'' algebras that are constructed by deforming the multiplicative structure of a commutative polynomial ring. A general way of building such quantized algebras comes from studying algebras that are filtered and whose associated graded algebra with respect to this filtration is exactly this polynomial ring.  Given a field $K$ and a graded $K$-algebra $B$, a $\mathbb{Z}_+$-\emph{filtered deformation} (henceforth we shall drop the $\mathbb{Z}_+$) of $B$ is simply an algebra $A$ with a filtration 
\begin{equation}\label{eq:filt}
V_0\subseteq V_1 \subseteq V_2 \subseteq \cdots \subseteq \bigcup_i V_i=A
\end{equation} such that $B$ is the associated graded algebra of $A$.  An important class of filtered deformations of polynomial rings comes from enveloping algebras over a base field $K$ where the filtration is given by $V_n=V^n$, and where $V$ is the subspace spanned by the image of the Lie algebra and $1$.  In positive characteristic, enveloping algebras of finite-dimensional Lie algebras are known to be well-behaved.  In particular, they satisfy a polynomial identity.  One can think of this as saying that filtered deformations of polynomial algebras that are generated in degree one behave much like algebras that are finite over their centres.  Etingof \cite[Question 1.1]{Etingof} asked whether this phenomenon holds more generally for all filtered deformations of graded commutative algebras over positive characteristic algebraically closed fields and he proved that when a filtered deformation of a commutative algebra in positive characteristic satisfies a polynomial identity then the degree of the identity is a power of the characteristic of the field.

Understanding when filtered deformations in positive characteristic satisfy a polynomial identity is often useful when looking at filtered deformations of finitely generated commutative algebras in characteristic zero. The reason for this is that the algebras are finitely presented and hence have models over a base ring that is finitely generated as a $\mathbb{Z}$-algebra.  One can then reduce mod maximal ideals of the base ring to obtain filtered deformations of algebras in positive characteristic, and the fact that these have large centres in positive characteristic can sometimes be used to understand subtle phenomena in the characteristic zero setting. In some circumstances, for instance, this approach can be used to lift mod $p$ information to prove non-trivial results in characteristic zero; this philosophy inspired a related question of Cuadra, Etingof, and Walton \cite{CEW}.   

In general, Etingof's question has proved difficult to answer, although it has been answered in the affirmative in the case of iterated Hopf Ore extensions in positive characteristic \cite{BZ} and for algebras of Gelfand-Kirillov dimension at most two \cite{Bell}.  Etingof \cite{Etingof} notes that the case of filtered deformations of polynomial rings is already non-trivial and is of special interest. In this paper, we prove that Etingof's question has an affirmative answer in the case when the associated graded algebra is a polynomial ring in three variables.  
\begin{thm}
Let $K$ be an algebraically closed field of positive characteristic and let $A$ be a $K$-algebra with an exhaustive filtration by finite-dimensional $K$-vector subspaces, whose associated graded ring is a polynomial algebra over $K$ in three variables.  Then $A$ satisfies a polynomial identity.
\label{thm:main2}
\end{thm}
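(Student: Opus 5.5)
The plan is to classify filtered deformations of $K[x,y,z]$, where $x,y,z$ have degrees $d_1\le d_2\le d_3$, and then verify the PI property case by case. Lift $x,y,z$ to elements $X,Y,Z\in A$ of the same filtration degrees. Since $\mathrm{gr}(A)$ is commutative, each commutator $[X,Y]$, $[X,Z]$, $[Y,Z]$ has filtration degree strictly below the sum of the degrees of its two entries. Since $\mathrm{gr}(A)=K[x,y,z]$, the ordered monomials $X^aY^bZ^c$ form a PBW-type basis of $A$. The commutator $[X,Y]$ therefore expands as a polynomial in ordered monomials of lower degree, and likewise for the other two commutators.

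The first reduction uses the Poisson bracket on $\mathrm{gr}$ induced by the leading terms of commutators. It is a Poisson bracket of some negative degree on $K[x,y,z]$, or zero. Poisson brackets on $K[x,y,z]$ are essentially Jacobian brackets $\{f,g\}=h\cdot \det J(f,g,\phi)$. Because of the grading constraints, I expect a short list of normal forms after a graded change of variables. Either some generator, say $x$, is central modulo lower terms in a strong sense, or the structure resembles that of an enveloping algebra of a $3$-dimensional Lie algebra or of its nonlinear analogues, such as Smith's $U(\mathfrak{sl}_2)$-like algebras $[X,Y]=Y$, $[X,Z]=-Z$, $[Y,Z]=f(X)$. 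The key structural step is to show that, after a triangular change of generators $Y\mapsto Y+p(X)$, $Z\mapsto Z+q(X,Y)$, the algebra $A$ becomes an iterated Ore extension $K[X][Y;\sigma_1,\delta_1][Z;\sigma_2,\delta_2]$ with $\sigma_i$ the identity, or a quotient-compatible variant of one. The reason is degree counting: $X$ has minimal degree, so $[X,Y]$ has degree less than $d_1+d_2$. That bound forces strong restrictions, and when $d_1<d_2$ it typically forces $[X,Y]\in K[X]$ plus linear terms in $Y$.

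Once $A$ is presented as an iterated differential-operator-type Ore extension $R[Y;\delta_1][Z;\delta_2]$ with $R=K[X]$, the PI property follows by standard positive-characteristic arguments. A derivation $\delta$ of an affine commutative algebra $R$ in characteristic $p$ satisfies a $p$-polynomial relation over a central subalgebra, so $R[Y;\delta]$ is finite over its centre. The second extension requires that $\delta_2$ be a derivation of the PI algebra $R[Y;\delta_1]$ that is "locally algebraic" in the $p$-power sense. I would verify this using the filtration: $\delta_2^{p^n}$ is again a derivation, and all of these lie in the finite-dimensional space of derivations that lower degree by a bounded amount on each filtered piece. Linear dependence then gives a $p$-polynomial in $\delta_2$ that vanishes, and so $Z$ satisfies a central $p$-polynomial equation modulo an inner derivation. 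This is the mechanism behind the iterated Hopf Ore extension results cited from \cite{BZ}. In the cases where the bracket forces a generator to be non-central in an essential way, as in the $U(\mathfrak{sl}_2)$-like algebras, I would instead exhibit explicit central elements such as $X^p-X$, $Y^p$, $Z^p$ and check that $A$ is a finite module over the subalgebra they generate.

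The main obstacle is the classification step, specifically the case $d_1=d_2=d_3$ or other coincidences of degrees. There the leading bracket can be genuinely nonlinear, and a triangular change of variables producing an Ore extension is not obvious. My first attempt would be to show that the centre of the Poisson algebra $\mathrm{gr}(A)$ contains a homogeneous element $\phi$ (the Casimir in the Jacobian form). I would then lift $\phi$ to an element $\Phi\in A$ and show that $\Phi$ commutes with $X,Y,Z$ up to terms of strictly lower degree. An induction on the filtration should then either give a central element or reduce $A$ to a deformation in which the analysis of \cite{Bell} for Gelfand–Kirillov dimension two applies to the quotients $A/(\Phi-\lambda)$. Combined with the fact that the $p$-th powers of elements generate a large commutative subalgebra, this should yield finiteness over a central subalgebra and hence the PI property.
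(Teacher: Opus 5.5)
Your overall plan (classify, then check each class) matches the paper's, but both halves have genuine gaps.

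\textbf{The classification step fails.} You expect triangular changes of generators to make $A$ an iterated Ore extension, with trouble only when degrees coincide. But $[X,Y]$ may contain $Z$ with nonzero coefficient whenever $\deg Z\le \deg X+\deg Y-1$. After normalizing to $[X,Y]=Z$ one gets algebras that are in general not Ore extensions, and this already happens with strictly increasing degrees. For instance, $[x,y]=z$, $[x,z]=z-2xy+\zeta$, $[y,z]=y^2+u(x)$ (family (4) of Theorem \ref{thm:main}) is a filtered deformation with $(\deg x,\deg y,\deg z)=(2,3,4)$ and $\deg u\le 3$. The cases $[x,y]\in K[x]$ and $[x,y]=y$ do give Ore extensions, though with unipotent and generally nontrivial twists. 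The case $[x,y]=z$, however, produces families (2)--(9), and several of these have no easy central elements. In family (4), ${\rm ad}_x$ acts $K[x]$-linearly on $K[x]+K[x]y+K[x]z$. Modulo $K[x]$, its matrix on $y,z$ has characteristic polynomial $\lambda^2-\lambda+2x$, which has distinct roots. So ${\rm ad}_{x^p}={\rm ad}_x^p$ does not kill $y$, and $x^p$ is not central.

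The paper's central elements instead come from case-specific arguments:
\begin{itemize}
\item for (3), an explicit lift of a Poisson central element;
\item for (4), the element $z^2+2xy^2-2yz+2y^2-2\zeta y+f(x)$, built over the fraction field of $\mathbb{Z}[t_0,\ldots,t_n,s]$ (the existence of $f$ uses the parametrization of the conic $(s-t)^2+s+t=0$) and transported to characteristic $p$ by the rank argument of Lemma \ref{lem:lift};
\item for (6) and (8), a polynomial in $x$, using that ${\rm ad}_x$ preserves a finite-dimensional space containing $y$ and $z$.
\end{itemize}
Your fallback, lifting a Casimir of ${\rm gr}A$ and correcting it by induction on the filtration, is exactly the step that needs proof. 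At each stage the lower-order error must lie in the image of the commutator maps, and nothing guarantees this. Moreover, in characteristic $p$ every $p$-th power is already Poisson-central in ${\rm gr}A$, so the Poisson centre says little about which lifts exist. The paper's lifts are explicit computations, not a general induction.

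\textbf{The passage from a central element to PI does not close.} Suppose you had a central $\Theta\notin K$.
\begin{itemize}
\item PI of the quotients $A/(\Theta-\lambda)$ says nothing about $A$ without a uniform bound on their PI degrees.
\item These quotients need not have a domain as associated graded ring, so \cite{Bell} does not apply as stated.
\item $p$-th powers are not central in general (cf.\ $x^p$ above), so they do not give a central subalgebra over which $A$ is finite.
\end{itemize}
The missing ingredient is Proposition \ref{prop:invoke}. Take $L=K(\Theta,y)$, of transcendence degree $2$; then $\dim_L LV_n/LV_{n-1}=O(1)$. On the other hand, for $a$ divisible by $p^i$, the commutator $[y,z^{a}]=-{\rm ad}_{z^{a/p^i}}^{p^i}(y)$ has unexpectedly low degree. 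Together these force the centralizer of $y$ in ${\rm Frac}(A)$ to be infinite-dimensional over $L$. Since all subfields are finitely generated by Lemma \ref{lem:vamos}, the centralizer contains a subfield of transcendence degree $3$, and \cite{Belldiv} then gives PI.

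\textbf{The Ore case argument is also flawed.} The derivations $\delta_2^{p^n}$ do not lie in a fixed finite-dimensional space, because each application of $\delta_2$ may raise degree by up to $\deg Z-1$. For family (1) the paper simply cites \cite[Theorem 3.3]{BZ}, using that unipotent automorphisms have $p$-power order.
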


Our approach to proving Theorem \ref{thm:main2} is admittedly ham-fisted and relies on first classifying filtered deformations of three-variable polynomial rings, although we believe this classification is interesting in its own right.  Even armed with a classification, showing the algebras satisfy polynomial identities in positive characteristic is non-trivial for several of the families we obtain.  The reason for this is that in many cases, the families have models over characteristic zero fields that don't satisfy a polynomial identity and so the PI degrees of the algebras grow with the characteristic of the field; moreover, in many cases there are not obvious central elements one can work with. 

An important and natural class of filtered deformations of polynomial rings comes from \emph{iterated Ore extensions of unipotent type}.  We recall that a skew polynomial ring over a base $K$-algebra $R$ is $R[x]$ as a set but with multiplication ``deformed'' by extending the multiplication on $R$ to $R[x]$ via 
$x\cdot r = \sigma(r) x +\delta(r)$ where $\sigma$ is a $K$-linear automorphism of $R$ and $\delta$ is a $\sigma$-derivation; i.e., $\delta$ is $K$-linear and $\delta(ab)=\delta(a)b+\sigma(a)\delta(b)$ for $a,b\in R$.  We write $R[x;\sigma,\delta]$ to denote the resulting algebra. We say that an automorphism $\sigma$ of $R$ is \emph{unipotent} if the operator $\Delta = \sigma-{\rm Id}$ is locally nilpotent. If $R$ is finitely generated and $K$ has characteristic $p>0$ then this says $\sigma$ has order a power of $p$, since $\sigma^{p^n} = {\rm Id} +\Delta^{p^n}$ and for $n$ large enough, $\Delta^{p^n}$ annihilates a generating set of $R$.  We then say that a ring is an iterated Ore extension of $K$ if it is isomorphic to an algebra of the form $K[x_1][x_2;\sigma_2,\delta_2]\cdots [x_n;\sigma_n,\delta_n]$ and if, moreover, each $\sigma_i$ is unipotent, we say that it is of unipotent type.

A careful study yields the following lengthy classification.  We note that many of the families only exist in characteristics $2$ or $3$.
\begin{thm}\label{thm:main}
Let $K$ be a field and let $A$ be a filtered deformation of a three-variable polynomial ring over $K$.  Then $A$ is isomorphic to one of the following algebras:
\begin{enumerate}
\item An iterated Ore extension $K[x][y;\sigma_1,\delta_1][z;\sigma_2,\delta_2]$ of $K$ of unipotent type;
\item The algebra with generators $x,y,z$ and relations $$[x,y]=z, \qquad [x,z]=\gamma y, \qquad [y,z]=f(x)$$ for some $\gamma\in K$ and some $f(x)\in K[x]$.

\item The algebra with generators $x,y,z$ and relations
$$[x,y]=z, \quad [x,z]=\delta\beta(z-2xy)+\delta y^2+\beta x^3+\gamma_2 x^2+\gamma_1 x+\gamma_0,$$ 
$$[y,z]=u(x)-\Big(\gamma_1+(2\gamma_2-2\delta\beta^2)x+3\beta x^2\Big)y+\Big(\gamma_2-\delta\beta^2+3\beta x\Big)z.$$
 $\delta, \beta,\gamma_0,\gamma_1,\gamma_2\in K$, $u(x)\in K[x]$ of degree at most four; moreover, we can take either $\delta=0$; $\delta\neq 0, \beta=\delta^{-1}$; $\delta\neq 0, \beta=0$.
 
\item The algebra with generators $x,y,z$ and relations
$$[x,y]=z, [x,z]= z -2 xy +\zeta, [y,z]= y^2 +u(x),$$
where $\zeta\in K$, $u(x)\in K[x]$.

\item When the characteristic of $K$ is $3$, we get the family with generators
$x,y,z$ and relations
$$[x,y]=z,\quad [x,z]= \alpha z +\delta\gamma\, xy + \delta y^2 + \big(\gamma x^3 +\delta \gamma^2 x^2-\gamma (\alpha-\delta \gamma)^2 x+\mu\big),$$
and
$$[y,z]= \gamma(\delta\gamma -\alpha)\,z,$$
where $\alpha,\delta,\gamma,\mu\in K$.


 \item When the characteristic of $K$ is $2$, we get the family with generators $x,y,z$ and relations
$$[x,y]=z, \quad [x,z]= \alpha z + \beta y + \big(\alpha\delta^{-1} x^3+\gamma_2 x^2 + \gamma_1 x + \gamma_0\big) +\delta y^2,$$
$$[y,z]=u(x)+\big(\alpha\delta^{-1} x^2+\gamma_1+\alpha\beta\delta^{-1}\big)y +\big(\alpha\delta^{-1} x+\gamma_2+\alpha\delta^{-1}\big)z,$$
where $\alpha\in\{0,1\}$, $\delta,\beta,\gamma_2,\gamma_1,\gamma_0\in K$ with $\delta\neq 0$, and $u(x)\in K[x]$ has degree at most $4$. 

\item When the characteristic of $K$ is $2$, we get the family with generators $x,y,z$ and relations
$$[x,y]=z, [x,z]=  z + \beta y + ( \gamma x^2 + \gamma x + \zeta ) +\delta y^2,  [y,z]= \gamma z,$$ where $\beta, \gamma, \zeta, \delta\in K$.
\item When the characteristic of $K$ is $2$, we get the family with generators $x,y,z$ and relations
$$[x,y]=z,\quad [x,z]=  \epsilon z +\beta y + (\gamma_3 x^3+ \gamma_2 x^2 + \gamma_1 x + \gamma_0 )$$
and $$[y,z] = u(x)+(\gamma_3 x^2+\gamma_1+\beta\gamma_3)y+(\gamma_3 x+\gamma_2+\epsilon\gamma_3)z+\epsilon y^2,$$
where $u(x)\in K[x]$ has degree at most four and $\beta,\gamma_0,\gamma_1,\gamma_2,\gamma_3,\epsilon\in K$.

\item When the characteristic of $K$ is $2$, we get the family with generators $x,y,z$ and relations
$$[x,y]=z, [x,z]=\epsilon z, [y,z]= u(x)+v(x)(\epsilon y+z)+\epsilon y^2,$$
where $\epsilon\in K, u(x),v(x)\in K[x]$ and with $v$ of degree at most one and $u$ of degree at most four.
\end{enumerate} 
\end{thm}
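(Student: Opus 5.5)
We would begin by normalizing the generators. Since ${\rm gr}(A)=K[X,Y,Z]$ with a nonnegative grading, we lift homogeneous generators $X,Y,Z$ of degrees $a\le b\le c$ to elements $x,y,z\in A$ of the same filtration degrees. The ordered monomials $x^iy^jz^k$ then form a $K$-basis of $A$, a PBW-type statement read off from the associated graded ring. Because ${\rm gr}(A)$ is commutative, every commutator drops filtration degree:
\[ [x,y]\in V_{a+b-1},\qquad [x,z]\in V_{a+c-1},\qquad [y,z]\in V_{b+c-1}. \]
We expand each commutator in the PBW basis. The key observation is that a monomial $x^iy^jz^k$ lies in $V_{a+b-1}$ only if $ia+jb+kc<a+b$. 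This forces $k=0$ unless $c<a+b$, and also $j=0$ apart from small exceptions. So the degree inequalities alone already sharply restrict which monomials can appear.

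Next we split into cases according to these degree constraints. First suppose $[x,y]$ involves only $x$ and $y$, with $y$ appearing to degree at most one. This happens when $c\ge a+b$, or when the coefficient of $z$ in $[x,y]$ vanishes. Then $K\langle x,y\rangle$ is an Ore extension $K[x][y;\sigma_1,\delta_1]$ with $\sigma_1$ unipotent. We would then show that $[z,\cdot]$ on this subalgebra has the form $z r=\sigma_2(r)z+\delta_2(r)$, using that $[x,z]$ and $[y,z]$ have $z$-degree at most one, again by the degree bound. This lands in case (1).

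Otherwise $z$ appears in $[x,y]$ with nonzero coefficient. This forces $c=a+b-1$, or, after a change of variables, we may simply replace $z$ by $[x,y]$. The degree bounds then imply $a=b=1$ and $c=1$ (or small variants such as $a=1$, $c=b$), so that $[x,y]=z$ exactly after absorbing lower terms. We then write $[x,z]$ and $[y,z]$ as general polynomials in the PBW basis of bounded degree, with undetermined coefficients: quadratic in $y$, cubic in $x$, and so on. This accounts for the degree-four bound on $u(x)$ and the cubic terms in $x$ appearing in the statement.

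The heart of the argument is imposing the Jacobi identity
\[ [x,[y,z]]+[y,[z,x]]+[z,[x,y]]=0. \]
Equivalently, we require that the overlap ambiguities $zyx$ resolve in the Diamond Lemma sense, which is necessary and sufficient for the PBW basis to exist. Expanding this identity in the PBW basis gives polynomial equations in the coefficients. We then use the remaining freedom to simplify: affine substitutions $x\mapsto x+c$, $y\mapsto y+\lambda x+\mu$, and rescalings, with $z$ redefined as $[x,y]$. This lets us kill coefficients and reach the normal forms (2)--(4) in general characteristic. Some equations have the shape $p\cdot(\text{coefficient})=0$ with $p=2$ or $p=3$, since there the constants $2$, $3$, $6$ vanish. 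Those equations yield the extra families (5)--(9), which exist only in characteristic $3$ or $2$.

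We expect the main obstacle to be this last step. The Jacobi system is large, and splitting it into subcases is delicate: we must track which coefficients can be normalized, for instance whether $\delta\ne 0$ allows $\beta=\delta^{-1}$. We must also ensure every branch lands in a listed family rather than an overlooked one. To control the bookkeeping, we would first solve the top-filtration-degree part of the Jacobi identity, where only the leading coefficients contribute and the possibilities are few. We would then descend degree by degree, each time treating characteristics $2$ and $3$ separately whenever a coefficient divisible by $2$ or $3$ appears.
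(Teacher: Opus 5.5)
\textbf{There is a genuine gap.} Your outline matches the paper's: a PBW basis read off from ${\rm gr}(A)$, commutators dropping degree, an Ore extension when $z$ does not occur in $[x,y]$, then the Jacobi identity. The failure is in the step that is supposed to fix the shape of $[x,z]$ and $[y,z]$ once $[x,y]=z$, and everything after it depends on that step.

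\emph{The degree reduction is false.} Seeing $z$ in $[x,y]$ gives only $c\le a+b-1$, not $c=a+b-1$. The degree bounds also do not force $a=b=c=1$ or $a=1$, $c=b$. In fact if $a=1$ then $c=b$, so:
$[x,z]\in V_b$ and $[y,z]\in V_{2b-1}$ cannot contain $y^2$;
the coefficient of $y$ in $[x,z]$ is a constant;
the Jacobi analysis then yields only families (1) and (2).
A $y^2$ term in either bracket requires $a\ge 2$; for instance, $y^2$ in $[x,z]$ needs $2b\le a+c-1\le 2a+b-2$. Having $y^2$ in $[x,z]$ and $x^4$ in $[y,z]$ simultaneously, as in family (3) with $\delta c_4\neq 0$, forces $a\ge 6$; the smallest case is $(a,b,c)=(6,10,15)$. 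This step also contradicts your next sentence: with $a=b=c=1$ both brackets lie in $V_1$, so they cannot be ``quadratic in $y$, cubic in $x$''.

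\emph{The missing idea is the dichotomy $b>2a-2$ versus $b\le 2a-2$.}
\begin{itemize}
\item If $b>2a-2$, then $[x,z]=u(x)+v(x)y+\alpha z$ with $\deg v\le 1$, and $[y,z]=f(x)+g(x)y+h(x)z+\delta y^2$. The degrees of $u,f,g,h$ are not bounded independently of $(a,b,c)$. This is the source of the arbitrary polynomials in families (2) and (4), which no fixed bounded-degree ansatz can produce. Normalizing here uses the non-affine substitution $y\mapsto y+v(x)$, obtained by dividing $u(x)$ by the linear coefficient of $y$; your list of affine substitutions does not include it.
\item If $b\le 2a-2$, one gets the finite spans $\{1,x,x^2,x^3,y,xy,y^2,z\}$ for $[x,z]$ and $\{1,x,\dots,x^4,y,xy,x^2y,y^2,z,xz\}$ for $[y,z]$. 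This is where the cubic and quartic bounds actually come from.
\end{itemize}

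\emph{The Jacobi analysis is not carried out.} The real content of the theorem is the explicit solution of the Jacobi system in each regime, split according to which of $\delta,\epsilon,\alpha,\beta_1$ vanish. For example, the first regime gives $\beta=-2\delta$ and $\delta(\alpha-\delta)=0$. In the subcase $\alpha\beta_1\delta\neq 0$ of the second regime, one gets $\beta_1=-2\delta\gamma_3$ and a split on whether $1-\delta\gamma_3$ vanishes. Your proposal leaves all of this as a plan. So even with the degree analysis fixed, it does not yet show that every branch lands in the list.
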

\begin{remark}
We note that although we are using the word ``classification'' we do not claim that this list is irredundant up to isomorphism, and so the isomorphism problem within and among families is not resolved; moreover, when $K$ is algebraically closed, family (2) can also be realized as an iterated Ore extension by making a change of variables $x'=x, y'\in Ky+Kz, z'=y$ with $y'$ a linear combination of $y$ and $z$ (involving $z$ non-trivially) that makes $y'$ an eigenvector for ${\rm ad}_x$.  In particular, when $K$ is algebraically closed and of characteristic zero, the classification consists precisely of families (1), (3), and (4).
\end{remark}

The outline of this paper is as follows. In \S \ref{sec:prelim} we establish our notation and the cases we study to obtain the classification result, Theorem \ref{thm:main}. In \S\ref{sec:I}--\ref{sec:IIIb} we work through each of the cases and obtain the families given in Theorem \ref{thm:main}. In \S\ref{sec:main} and \S\ref{sec:et} we prove Theorems \ref{thm:main} and \ref{thm:main2} respectively.

\section{Preliminaries for proof of Theorem \ref{thm:main}}\label{sec:prelim}
We let $K$ be a field and let $A$ be a filtered deformation of $K[x,y,z]$.  In particular, $A$ is equipped with an ascending filtration 
$$K=V_0\subseteq V_1\subseteq V_2\subseteq \cdots$$ such that each $V_i$ is finite-dimensional, $A=\bigcup V_n$, and the associated graded algebra $\bigoplus_{n\ge 1} V_n/V_{n-1}$ of $A$ with respect to this filtration is isomorphic to $K[x,y,z]$.  Given a nonzero element $u\in A$, we let ${\rm deg}(u)$ denote the smallest nonnegative integer $n$ such that $u\in V_n$, and call this the degree of $u$.  We take $-\infty$ to be the degree of $0$.  Since the associated graded ring of $A$ is commutative, we have the inequality
$${\rm deg}([u,v])\le {\rm deg}(u)+{\rm deg}(v)-1$$ for $u,v\in A$.  Since the associated graded ring of $A$ is a polynomial algebra in three variables, we have that $A$ has generators $x,y,z$ and $A$ has a $K$-basis given by monomials $x^i y^j z^k$ with $i,j,k\ge 0$.  We let $a,b$, and $c$ denote the respective degrees of $x,y$, and $z$ and we may assume that $a\le b\le c$.  Then since $[x,y]$ has degree at most $a+b-1$, we see that $[x,y]$ is a linear combination of monomials $x^i y^j z^k$ in which either $k=1$ and $i=j=0$; or $j=1$ and $i=k=0$; or $j=k=0$.  In particular, if $z$ occurs with nonzero coefficient in our expression for $[x,y]$ as a linear combination of monomials $x^i y^j z^k$, then we have
$[x,y]=\alpha z+h(x,y)$ for some nonzero $\alpha\in K$ and some polynomial $h=\sum c_{i,j} x^i y^j$, depending only on $x$ and $y$, and so we may replace the generator $z$ by $\alpha z+h(x,y)$ and assume without loss of generality that $[x,y]=z$.
Alternatively, if $z$ does not occur in our expression for $[x,y]$ then there are two possibilities:
\begin{itemize}
\item either $[x,y]=\alpha y+h(x)$ for some nonzero $\alpha\in K$ and some polynomial $h(x)$, in which case, we can replace the generator $y$ with $\alpha y+h(x)$ and the generator $x$ with $\alpha^{-1} x$ and assume that $[x,y]=y$; or
\item $[x,y]=h(x)$ for some polynomial $h(x)$.
\end{itemize}
Thus after making these substitutions we may assume that our algebra has three generators $x,y,z$ with respective degrees $a,b,c$ and $a\le b\le c$ such that one of the following must occur:
\begin{enumerate}
\item[Case I:] $[x,y]=h(x)$ for some $h(x)\in K[x]$;
\item[Case II:] $[x,y]=y$;
\item[Case III:] $[x,y]=z$.
\end{enumerate}
As it turns out, Case III is the most difficult case to study, with the other cases being considerably simpler.  We thus classify filtered deformations in Cases I and II first and divide Case III into two subcases:
\begin{enumerate}
\item[Case III(a):] $[x,y]=z$, and $b>2a-2$.
\item[Case III(b):] $[x,y]=z$ and $b\le 2a-2$.
\end{enumerate}

\section{Filtered deformations in Case I}\label{sec:I}
We recall that we have a $K$-algebra generated by elements $x,y,z$ with respective degrees $a,b,c$ and $a\le b\le c$ and such that $[x,y]=h(x)$.  This means that 
the subalgebra $B$ of $A$ generated by $x$ and $y$ is a skew polynomial ring of derivation-type $K[x][y;\delta]$ in which $\delta(x)=h(x)$.  Now since $x^i y^j z^k$ has degree strictly larger than $a+c-1$ if $k\ge 1$ and $i+j\ge 1$, we see that 
$[x,z]$ can only involve the monomials $z$ and monomials $x^i y^j$, thus $[x,z]=\alpha z+ h(x,y)$ for some $\alpha\in K$ and some $h(x,y)\in B$.  Similarly, $[y,z]$ can only involve monomials of the form $x^i z$ with $i\ge 0$ and monomials of the form $x^i y^j$ and so $[y,z]=u(x) z+ g(x,y)$ for some $u(x)\in K[x]$ and some $g(x,y)\in B$.  
Then it is straightforward to show that $A=K[x][y;\delta][z;\sigma,\mu]$ with $\sigma$ a unipotent automorphism given by $x\mapsto x-\alpha, y\mapsto y-u(x)$ and $\mu$ is a $\sigma$-derivation defined by $\mu(x)=h(x,y), \mu(y)=g(x,y)$. This is an iterated Ore extension of unipotent type and hence part of family (1) in Theorem \ref{thm:main}.
\section{Filtered deformations in Case II}\label{sec:II}
Here we have a subalgebra $B$ of $A$ generated by $x$ and $y$ and the relation $[x,y]=y$ gives that $B=K[y][x;\delta]$ with $\delta(y)=y$. As in Case I, we have that
$[x,z] =\alpha z+ h(x,y)$ for some $\alpha\in K$ and some $h(x,y)\in B$ and we have $[y,z]=u(x)z + g(x,y)$ for some $u(x)\in K[x]$ and some $g(x,y)\in B$.
If we let $\sigma$ be the unipotent automorphism of $B$ given by $\sigma(x)=x-\alpha, \sigma(y)=y-u(x)$ and let $\mu$ be the $\sigma$-derivation defined by $\mu(x)=h(x,y)$ and $\mu(y)=g(x,y)$, then $A=B[z;\sigma,\mu]$ is an iterated Ore extension.  The constraints on $g,h,u,\alpha$ needed for our algebra to be an iterated Ore extension (i.e., we need $\delta(xy-yx-y)=0$) are precisely those that ensure that the Jacobi identity holds and our algebras are covered by (1) in Theorem \ref{thm:main}.  
\section{Filtered deformations in Case III(a)}\label{sec:IIIa}
In this case $c\le a+b-1$, so $[x,z]$ has degree at most $2a+b-2$. In particular, since this is less than $2b$ we see that we cannot have any monomials $x^i y^j z^k$ with $j\ge 2$ or with $k\ge 1$ unless $(i,j,k)=(0,0,1)$. Thus 
$[x,z]= u(x) +v(x) y + \alpha z$ for some polynomials $u(x), v(x)\in K[x]$ and $\alpha\in K$. Moreover, $v(x)$ is at most linear. Similarly, 
$[y,z]$ has degree at most $b+c-1\le 2b+a-2$ and so $[y,z]$ can only involve monomials $y^2$ and monomials of the form $x^i, x^j y, x^k z$.
Thus we see in this case there are constants $\alpha,\beta,\gamma,\delta \in K$ and polynomials $u(x), f(x), g(x), h(x)$
such that we have relations of the form
$$[x,y]=z, \quad [x,z] = u(x) + (\beta x+\gamma)y + \alpha z, \quad [y,z]=f(x)+g(x)y + h(x)z+\delta y^2.$$

\subsection{Subcase 1: $(\beta,\gamma)\neq (0,0)$.}
In this case, we can write $u(x) = (\beta x + \gamma) v(x) + \zeta$ for some polynomial $v(x)$ and some $\zeta\in K$, where $\zeta=0$ if $\beta=0$. We can make a substitution 
$$x' = x, \quad y' = y+v(x), \quad z' = z,$$ 
and assume that
$$[x,y]=z, \quad [x,z] = \zeta + (\beta x+\gamma) y + \alpha z, \quad [y,z] = f(x) + g(x) y+h(x)z+\delta y^2.$$
We note that the general form of $[y,z]$ remains unchanged under this substitution, because we can always choose our degree so that the degree of $v(x)$ is smaller than ${\rm deg}(y)=b$ and we can ensure that our substitution algebra preserves the property of being a filtered deformation of a polynomial algebra.
Then the Jacobi identity constraint gives
$$[[x,y],z] = [[x,z],y]+[x,[y,z]].$$
Since $[[x,y],z] = [z,z] = 0$, using our relations we see we must have
$$\beta zy + \alpha [z,y] + g(x) z + h(x) [x,z] + \delta yz + \delta zy = 0,$$
which simplifies to
\begin{equation}
\label{eq:bdg}
(\beta+2\delta) yz - (\alpha +\beta+\delta) [y,z] + g(x) z + h(x)(\zeta + (\beta x + \gamma)y + \alpha z) = 0.
\end{equation}
Then considering the coefficient of $yz$ on both sides, we see that $\beta=-2\delta$. 
Considering the coefficient of $y^2$ on both sides, we see that $(\alpha+\beta+\delta) \delta=(\alpha-\delta)\cdot \delta=0$, so either $\delta=0$ or $\alpha=\delta$.  
In particular, if $\delta=0$, then $\beta=0$ and so our commutators simplify to:
$$[x,y]=z, \quad [x,z] = \gamma y +\alpha z, \quad [y,z]=f(x)+g(x)y + h(x) z.$$
The Jacobi identity $0 = [[x,z],y] + [x,[y,z]]$ now yields:
$$0 = [\gamma y + \alpha z, y] + [x, f(x) + g(x)y + h(x)z],$$
which expands to
$$0 = \alpha [z,y] + g(x)[x,y] + h(x)[x,z].$$
Substituting the commutators gives:
$$0 = -\alpha [y,z] + g(x)z + h(x)(\gamma y + \alpha z),$$
or equivalently, moving the $\alpha[y,z]$ term to the left hand side:
$$\alpha (f(x) + g(x)y + h(x)z) = \gamma h(x)y + (g(x) + \alpha h(x))z.$$
Comparing coefficients as modules over $K[x]$ then gives
$$\alpha h(x) = g(x) + \alpha h(x) \implies g(x) = 0.$$
Looking at the $y$ coefficient gives $\alpha g(x) = \gamma h(x)$. Since $g(x) = 0$, this becomes $0 = \gamma h(x)$. By the assumption of this subcase, $(\beta,\gamma)\neq (0,0)$. Having already shown $\beta=0$, it follows that $\gamma \neq 0$, which forces $h(x)=0$. 

Comparing the $x$-polynomial coefficients gives $\alpha f(x) = 0$. Thus, our commutators simplify to:
$$[x,y] = z, \quad [x,z] = \gamma y + \alpha z, \quad [y,z] = f(x),$$
with the constraint that either $\alpha = 0$ or $f(x) = 0$.
If $f(x)=0$, then our algebra is an iterated Ore extension $K[y,z][x;\delta]$ where $\delta(y)=z, \delta(z)=\gamma y$ and otherwise we get an algebra
\begin{equation}
[x,y]=z, \qquad [x,z]=\gamma y, \qquad [y,z]=f(x),
\end{equation}
which is family (2) in the statement of Theorem \ref{thm:main}.  Thus when $\delta=0$ we see this case yields families from (1) and (2) in Theorem \ref{thm:main}.

When $\delta\neq 0$, we have $\alpha=\delta, \beta=-2\delta$ so our commutators become
$$[x,y]=z, \quad [x,z] = \zeta + (-2\delta x+\gamma) y + \delta z, \quad [y,z] = f(x) + g(x) y+h(x)z+\delta y^2.$$
If the characteristic of $K$ is not $2$ then $2\delta\neq 0$.  We can in this case make a change of variables
$x'=\delta x-\gamma/2$, $y' = y$, $z'=\delta z$ and our relations are now of the form
$$[x,y]=z, \quad [x,z] = -2xy + z+\zeta, \quad [y,z] = f(x) + g(x) y+h(x)z+y^2.$$
Since $\alpha+\beta+\delta=0$ and since $\beta x+ \gamma$ is nonzero, looking at the $x$-polynomial coefficients of $y$ on both sides of Equation (\ref{eq:bdg}) gives
$h(x)=0$.  Then, looking at the $x$-polynomial coefficients of $z$ gives $g(x)+\alpha h(x)=0$ and since $\alpha=\delta\neq 0$, we see $g(x)=0$, so we get 
the commutators
$$[x,y]=z,\quad [x,z] =z-2xy +\zeta, \quad [y,z]=y^2+f(x),$$ which is family (4) from Theorem \ref{thm:main}.

Finally, if the characteristic of $K$ is two and $\delta\neq 0$ then $\beta=-2\delta=0$, so by assumption $\gamma\neq 0$ and since $\beta=0$, by the remarks above $\zeta=0$ and so our commutators are given by 
$$[x,y]=z, [x,z] = \gamma y + \delta z, [y,z] = \delta y^2 + f(x) + g(x) y + h(x) z.$$
Then the Jacobi identity gives the constraints
$$0=[[x,y],z]=[x,[y,z]]+[[x,z],y] = (\delta [y,z] + g(x) z + h(x) [x,z])+\delta [z,y])$$ and so
$$g(x)z + h(x) (\gamma y + \delta z)=0.$$ Since $\gamma\neq 0$ by assumption, we see $h(x)=g(x)=0$ and so we see that the algebras we obtain are from family (8) with $\gamma_3=\gamma_2=\gamma_1=\gamma_0 =0$ in this case.

\subsection{Subcase 2: $\beta=\gamma=0$}
In this case, our relations are
$$[x,y]=z, \quad [x,z] =  u(x) +  \alpha z, \quad [y,z] = f(x) + g(x) y+h(x)z.$$
The Jacobi identity gives the constraint
$[x,[y,z]]=[[x,z],y]$.  The coefficients of terms of the form $x^iy$ on the left side can be checked to be $0$, whereas the contribution from these terms on the right side is
$\alpha g(x)y$, which can be observed by noting that 
the Hasse-Schmidt expansion for the bracket $[u(x), y] = \sum_{n=1}^{\infty} (-1)^{n-1} u^{[n]}(x) \text{ad}_x^n(y)$ shows that $[y,u(x)]$ involves no monomials of the form $x^iy$ when we put our expression in normal form and $[\alpha z,y]$ contributes $\alpha g(x)y$.  It follows that $\alpha g(x)=0$.

If $g(x)=0$ then our algebra is given by the relations 
$$[x,y]=z, \quad [x,z] =  u(x) +  \alpha z, \quad [y,z] = f(x) +h(x)z$$ and so it is an iterated Ore extension of unipotent type of the form $K[x][z;\sigma_1,\delta_1][y;\delta_2]$.

On the other hand, if $\alpha=0$ then our algebra is of the form
$$[x,y]=z, \quad [x,z] =  u(x), \quad [y,z] = f(x) + g(x) y+h(x)z,$$ which is again an iterated Ore extension of unipotent type of the form
$K[x][z;\delta_1][y;\sigma_2,\delta_2]$.

\section{Filtered deformations in Case III(b)} \label{sec:IIIb}
Here since $b\le 2a-2$ and $z=[x,y]$ we see that $c\le a+b-1\le 3a-3$.  Thus
$[x,z]$ has degree at most $2a+b-2\le 4a-4$ and $[y,z]$ has degree at most $a+2b-2\le 5a-6$.  In particular, $[x,z]$ is a linear combination of monomials $x^i y^j z^k$ from
$$\{1,x,x^2,x^3, y, xy, y^2, z\}$$ and 
$[y,z]$ is a linear combination of monomials $x^i y^j z^k$ from
$$\{1,x,x^2,x^3, x^4, y, xy, x^2y, y^2, z, xz\}.$$  
So we can write
\begin{equation}
[x,z]= \alpha z + (\beta_1 x +\beta_0) y + (\gamma_3 x^3+ \gamma_2 x^2 + \gamma_1 x + \gamma_0 ) +\delta y^2
\end{equation}
and
\begin{equation}
[y,z]= u_0(x) + u_1(x) y + u_2(x) z + \epsilon y^2 
\end{equation}
where $u_0, u_1, u_2$ have degrees at most $4, 2$, and $1$ respectively.
We write $$u_0(x)=\textstyle\sum_{i=0}^4 c_ix^i,\ u_1(x)=e_2x^2+e_1x+e_0,\ u_2(x)=f_1x+f_0.$$
This is the most involved case and we use six subcases to deal with this case, based on whether the values $\alpha, \beta_1$, $\delta,$ and $\epsilon$ are zero or not.
For the analysis that follows, we will need these formulas:

\begin{equation} \label{eq:xxy}
[x^2,y] = xz+zx = 2xz - [x,z],
\end{equation}
\begin{equation} \label{eq:xxxy}
 [x^3, y] = x^2 z + xzx + zx^2 = 3x^2z - 3x[x,z] + [x,[x,z]].
 \end{equation}

\subsection{Subcase 1: $\delta=\epsilon=0$.}
In this case, the commutator brackets take the form of those given in Case III(a), and so we obtain no new deformations in this case.

\subsection{Subcase 2: $\delta=0$, $\epsilon \neq 0$.}
In this case the constraints given by the Jacobi identity give in particular $\alpha=\epsilon, \beta_1=-2\epsilon$.  Thus if the characteristic of $K$ is not $2$, $\beta_1=-2\epsilon \neq 0$.
By the division algorithm $$\gamma_3 x^3+ \gamma_2 x^2 + \gamma_1 x + \gamma_0= (-2\epsilon x+\beta_0) q(x) + \zeta$$ for some $q(x)$ of degree at most $2$ and some $\zeta\in K$.  By making a substitution $x'=x-\beta_0\epsilon^{-1}/2, y'=y-q(x), z'=z$, we see that the general form of the $[y,z]$ commutator remains unchanged so we get
$$[x,y]=z, [x,z]=\epsilon z -2\epsilon xy +\zeta, [y,z]=\epsilon y^2 +u_0(x)+u_1(x)y+u_2(x)z.$$
The remaining constraints from the Jacobi identity give:
$$ e_2=0, e_1=-\epsilon f_1, e_0=-\epsilon f_0, 2 f_1=0, -2\epsilon f_0=0, \zeta f_0 =0$$

Thus if the characteristic of $K$ is not $2$, this immediately gives us the family
$$[x,y]=z, [x,z]=\epsilon z -2\epsilon xy +\zeta, [y,z]=\epsilon y^2 +u_0(x),$$ which appears as family (4) in Theorem \ref{thm:main}.

If the characteristic of $K$ is $2$, then we have $\alpha=\epsilon$ and $\beta_1=2\epsilon=0$ and so our commutators take the form
$$[x,z]= \epsilon z +\beta_0 y + (\gamma_3 x^3+ \gamma_2 x^2 + \gamma_1 x + \gamma_0 )$$
and
$$[y,z]= u_0(x) + u_1(x) y + u_2(x) z + \epsilon y^2.$$
We find $$[[x,z],y]=\epsilon [y,z] + (\gamma_3 x^2+\gamma_1)z + (\gamma_3 x+\gamma_2)[x,z]+\gamma_3[x,[x,z]]$$
while $$[[y,z],x]= u_1(x) z + u_2(x) [x,z]+ \epsilon [y,z].$$
Since $[x,[x,z]]= \epsilon [x,z] + \beta_0 z$ and since $z$ and $[x,z]$ span a $2$-dimensional left $K(x)$-vector space when either $\beta_0\neq 0$ or $(\gamma_3,\gamma_2,\gamma_1,\gamma_0)\neq (0,0,0,0)$, when one of these conditions holds the Jacobi identity gives
$$ (\gamma_3 x^2+\gamma_1+\beta_0\gamma_3)=u_1(x)$$ and
$$(\gamma_3 x+\gamma_2+\epsilon\gamma_3) = u_2(x).$$
Thus we get the commutators 
$$[x,y]=z,\quad [x,z]=  \epsilon z +\beta_0 y + (\gamma_3 x^3+ \gamma_2 x^2 + \gamma_1 x + \gamma_0 )$$
and $$[y,z] = u_0(x)+(\gamma_3 x^2+\gamma_1+\beta_0\gamma_3)y+(\gamma_3 x+\gamma_2+\epsilon\gamma_3)z.$$
This is family (8) in Theorem \ref{thm:main}.

On the other hand, if $\beta_0=\gamma_3=\gamma_2=\gamma_1=\gamma_0=0$ we have the commutators
$$[x,y]=z, [x,z] =\epsilon z, [y,z] = u_0(x) + u_1(x) y + u_2(x) z + \epsilon y^2.$$
and the Jacobi identity gives
$$\epsilon [y,z] = [y,[x,z]] = [x,[y,z]] = u_1(x) z + u_2(x) \epsilon z + \epsilon [z,y],$$ and so we require that $u_1(x)=\epsilon u_2(x)$ and so we have the family
$$[x,y]=z, [x,z]=\epsilon z, [y,z]= u_0(x)+u_2(x)(\epsilon y+z)+\epsilon y^2.$$
This is family (9) in Theorem \ref{thm:main}.
 
\subsection{Subcase 3: $\alpha\beta_1\delta\neq 0$}
In this case, by first making a substitution of the form $x'=\alpha^{-1} x, y'=\alpha y, z'=z$, we can assume $\alpha=1$. Next by making a substitution of the form $x'=x+\beta_1^{-1}\beta_0$, $y'=y, z'=z$, we may assume that $\beta_0=0$.  Finally, we can make a substitution $x'=x, y'=y-\epsilon \delta^{-1} x$ to assume that $\epsilon =0$.  Thus we may assume that our commutators are given by
$$[x,y]=z,\quad [x,z]= z + \beta_1 x y + (\gamma_3 x^3+ \gamma_2 x^2 + \gamma_1 x + \gamma_0 ) +\delta y^2$$
and
$$[y,z]= u_0(x) + u_1(x) y + u_2(x) z.$$
Then the Jacobi identity constraint from $0=[[x,y],z]$ gives $[[x,z],y]= -[x,[y,z]]$. 
We compute $[[x,z],y]$ using Equations (\ref{eq:xxy}) and (\ref{eq:xxxy}) and find
$$[[x,z],y] = [z,y] + \beta_1 zy + \gamma_3 (3x^2 z-3x[x,z]+[x,[x,z]]) + \gamma_2 (2xz -[x,z]) + \gamma_1 z.$$
While $$-[x,[y,z]]= -u_1(x) z - u_2(x) (z+\beta_1 xy +(\gamma_3 x^3+ \gamma_2 x^2 + \gamma_1 x + \gamma_0 ) +\delta y^2).$$
Simplifying and comparing coefficients we obtain the following constraints:
$$\beta_1 =- 2\delta\gamma_3, \quad f_1 = 3\gamma_3, \quad  f_0 = \gamma_2 - \gamma_3 $$
 $$e_2 = -3\gamma_3, \quad e_1 = 3\gamma_3 M - 2\gamma_2 - \gamma_3\beta_1, 
    e_0 = (\gamma_2-\gamma_3)M - \gamma_1,$$
    and $$M e_2  = M e_1 = M e_0=0,\quad M \cdot u_0(x) =0,$$
   where $ M = 1-\delta\gamma_3$.
   
In particular, since $\beta_1\neq 0$ is a multiple of $2$, the characteristic of $K$ cannot be equal to $2$ in this subcase, so we assume for the remainder of this subcase that $K$ does not have characteristic $2$.

If $M\neq 0$ and the characteristic of $K$ is not $3$, then the constraints give $u_0(x)=e_2=e_1=e_0=\beta_1=\gamma_3=\gamma_2=\gamma_1=f_1=f_0=0$, and this is not allowed since we are assuming that $\beta_1\neq 0$.

If $M\neq 0$ and the characteristic of $K$ is $3$, we get constraints:
$$u_0(x)=e_2=e_1=e_0=f_1=0, \beta_1=\delta \gamma_3, f_0=\gamma_2-\gamma_3, \gamma_2=\delta \gamma_3^2, \gamma_1=(\gamma_2-\gamma_3)M.$$
This gives us the family
\begin{equation}
[x,y]=z,\quad [x,z]= z +\delta\gamma_3xy + \delta y^2 + (\gamma_3 x^3 +\delta \gamma_3^2 x^2-\gamma_3 (1-\delta \gamma_3)^2 x+\gamma_0),
\end{equation}
and
\begin{equation} [y,z]= (\gamma_3^2\delta -\gamma_3)z,
\end{equation}
which is the family from item (5) in Theorem \ref{thm:main}.

If $M=0$ then we get $\gamma_3=\delta^{-1}$, $\beta_1=-2$, $f_1=3\delta^{-1}$, $f_0=\gamma_2-\delta^{-1}$, $e_2=-3\gamma_3$, $e_1=-2\gamma_2-\delta^{-1}\beta_1$, $e_0=-\gamma_1$

and our
commutator brackets simplify to 
\begin{equation}
[x,y]=z,\quad [x,z]= z -2xy + \delta y^2 + (\delta^{-1} x^3 +\gamma_2 x^2+\gamma_1 x+\gamma_0),
\end{equation}
and
\begin{equation} [y,z]= u_0(x) - (\gamma_1+(2\gamma_2 -2\delta^{-1})x + 3\delta^{-1}x^2) y +
(\gamma_2-\delta^{-1} +3\delta^{-1} x) z,
\end{equation}
which is family (3) from Theorem \ref{thm:main} with $\alpha=1$.

\subsection{Subcase 4: $\delta\neq 0$, $\alpha=0$, $\beta_1
\neq 0$}
As in Subcase 3, after the substitutions $x'=x+\beta_1^{-1}\beta_0$ and
$y'=y-\epsilon\delta^{-1}x$ we may assume $\beta_0=\epsilon=0$; we cannot, however,
normalize $\alpha$, which is now zero. Thus we may assume that our commutators are given by
$$[x,y]=z,\quad [x,z]= \beta_1 x y + (\gamma_3 x^3+ \gamma_2 x^2 + \gamma_1 x + \gamma_0 ) +\delta y^2$$
and
$$[y,z]= u_0(x) + u_1(x) y + u_2(x) z.$$
The Jacobi identity gives $[[x,z],y]=-[x,[y,z]]$, and using Equations
(\ref{eq:xxy}) and (\ref{eq:xxxy}) we find
$$[[x,z],y] = \beta_1 zy + \gamma_3 (3x^2 z-3x[x,z]+[x,[x,z]]) + \gamma_2 (2xz -[x,z]) + \gamma_1 z,$$
while
$$-[x,[y,z]]= -u_1(x) z - u_2(x) \big(\beta_1 xy +(\gamma_3 x^3+ \gamma_2 x^2 + \gamma_1 x + \gamma_0 ) +\delta y^2\big).$$
Comparing coefficients we obtain
$$\beta_1 =- 2\delta\gamma_3, \quad f_1 = 3\gamma_3, \quad  f_0 = \gamma_2,$$
$$e_2 = -3\gamma_3, \quad e_1 = 3\gamma_3 N - 2\gamma_2 - \gamma_3\beta_1, \quad e_0 = \gamma_2 N - \gamma_1,$$
$$N e_2  = N e_1 = N e_0 = N\cdot u_0(x) = 0, \qquad N=-\delta\gamma_3.$$
Since $\beta_1=-2\delta\gamma_3
\neq 0$ we have $\gamma_3
\neq 0$ and $\mathrm{char}(K)
\neq 2$.

In particular, $N=-\delta\gamma_3
\neq 0$, so the last line forces
$$e_2=e_1=e_0=0,\qquad u_0(x)=0.$$
Combining $e_2=0$ with $e_2=-3\gamma_3$ gives $3\gamma_3=0$, and since $\gamma_3
\neq 0$
this is possible only when $\mathrm{char}(K)=3$. Hence there are no deformations in this
subcase unless $K$ has characteristic $3$.

When the characteristic of $K$ is three, the constraints simplify: $e_1=0$ gives
$\gamma_2=\delta\gamma_3^2$; $e_0=0$ gives $\gamma_1=\gamma_2 N=-\delta^2\gamma_3^3$; and
$f_1=3\gamma_3=0$, $f_0=\gamma_2=\delta\gamma_3^2$. Thus $u_0=u_1=0$ and
$u_2=\delta\gamma_3^2$, giving the family
$$[x,y]=z,\quad [x,z]= \delta\gamma_3\, xy + \delta y^2 + \big(\gamma_3 x^3 +\delta \gamma_3^2 x^2-\delta^2\gamma_3^3\, x+\gamma_0\big),$$
$$[y,z]= \delta\gamma_3^2\, z,$$
with $\delta
\neq 0$, $\gamma_3
\neq 0$, $\gamma_0\in K$. This is family (5) of
Theorem~\ref{thm:main} with $\alpha=0$.


\subsection{Subcase 5: $\delta\neq 0$, $\alpha\neq 0, \beta_1=0$}

As in Subcase 3, after the substitutions $x'=\alpha^{-1}x,\ y'=\alpha y$ and
$y'=y-\epsilon\delta^{-1}x$ we may assume $\alpha=1$ and $\epsilon=0$, so that
$$[x,y]=z,\quad [x,z]= z + \beta_0 y + (\gamma_3 x^3+ \gamma_2 x^2 + \gamma_1 x + \gamma_0 ) +\delta y^2,\quad [y,z]= u_0(x) + u_1(x) y + u_2(x) z.$$
The Jacobi identity gives $[[x,z],y]=-[x,[y,z]]$, and using Equations
(\ref{eq:xxy}) and (\ref{eq:xxxy}) the coefficient of $yz$ yields
$$2\delta\gamma_3=0.$$

Suppose first that the characteristic of $K$ is not two.
Then $\gamma_3=0$, and comparing the
remaining coefficients gives $\gamma_2=\gamma_1=0$ together with
$u_0(x)=u_1(x)=u_2(x)=0$. Hence $[y,z]=0$ and
$$[x,y]=z,\quad [x,z]=z+\beta_0 y+\gamma_0+\delta y^2,\quad [y,z]=0,$$
so that $A=K[y,z][x;\sigma,\delta]$ is an iterated Ore extension of unipotent
type and lies in family (1). We therefore assume for the
remainder of this subcase that the characteristic of $K$ is two.

Under this assumption the constraint $2\delta\gamma_3=0$ is vacuous, and solving the
Jacobi identity $[[x,z],y]=[[y,z],x]$ we obtain the conditions
$$f_0=\gamma_2+\gamma_3,\ f_1=\gamma_3,\ e_2=\gamma_3,\ e_1=\gamma_3+\delta\gamma_3^2,\ e_0=\gamma_1+\gamma_2+\gamma_3+\beta_0\gamma_3+\delta\gamma_3(\gamma_2+\gamma_3)$$
along with $(\delta\gamma_3+1)Z=0$ for all $Z\in\{\gamma_3,\gamma_1+\gamma_2,c_0,c_1,c_2,c_3,c_4\}$.
Thus when $\gamma_3=\delta^{-1}$ our commutators simplify to
$$[x,y]=z,\quad [x,z]=z + \beta_0 y + (\delta^{-1} x^3+ \gamma_2 x^2 + \gamma_1 x + \gamma_0 ) +\delta y^2,$$
and
$$[y,z] = u_0(x) + (\delta^{-1} x^2+\gamma_1+\beta_0\delta^{-1})y + (\delta^{-1} x+\gamma_2+\delta^{-1})z,$$
which is part of family (6) in Theorem \ref{thm:main}.
When $\gamma_3
\neq \delta^{-1}$ we get the additional constraints
$\gamma_3=\gamma_1+\gamma_2=u_0(x)=0$, and so our commutators become
$$[x,y]=z,\quad [x,z]=  z + \beta_0 y + ( \gamma_1 x^2 + \gamma_1 x + \gamma_0 ) +\delta y^2,\quad [y,z]= \gamma_1 z,$$
which is family (7) in Theorem \ref{thm:main}.

\subsection{Subcase 6: $\delta\neq 0$, $\alpha=\beta_1=0$}

In this case, as in Subcase 2, we may make a substitution and assume that $\epsilon=0$.
Thus our commutator brackets can be assumed to be of the form
 $$[x,y]=z,\quad [x,z]=   \beta_0 y + (\gamma_3 x^3+ \gamma_2 x^2 + \gamma_1 x + \gamma_0 ) +\delta y^2$$
and
$$[y,z]= u_0(x) + u_1(x) y + u_2(x) z.$$

We compute the coefficient constraints given by the Jacobi identity and obtain the following constraints:
$$f_0=\gamma_2, f_1=3\gamma_3, e_2=-3\gamma_3, e_1=3\delta\gamma_3^2-2\gamma_2, e_0=\delta\gamma_2\gamma_3-\beta_0\gamma_3-\gamma_1,$$
$$2\gamma_3 = 3\gamma_3^2=0 \implies \gamma_3=0$$
Thus our commutators simplify to
\begin{equation}
[x,y]=z,\quad [x,z]=\beta_0 y + \gamma_2 x^2 + \gamma_1 x + \gamma_0 +\delta y^2, \quad [y,z]=u_0(x) + (-2\gamma_2 x-\gamma_1)y +  \gamma_2 z.
\end{equation}
In the case that the characteristic of $K$ is equal to $2$, this simplifies to 
$$[x,y]=z, \quad [x,z]= \beta_0 y +  \gamma_2 x^2 + \gamma_1 x + \gamma_0 +\delta y^2,$$ $[y,z]=u_0(x)+\gamma_1y +\gamma_2 z$, which  
is just family (6) in Theorem \ref{thm:main}.

If the characteristic of $K$ is not $2$ then we can make a substitution $x'=x, y'=y+\delta^{-1}\beta_0/2, z'=z$ and our family simplifies to
\begin{equation}
[x,y]=z,\quad [x,z]= \gamma_2 x^2 + \gamma_1 x + \gamma_0 +\delta y^2, \quad [y,z]=u_0(x) + (-2\gamma_2 x-\gamma_1)y +  \gamma_2 z.
\end{equation}
This is a subfamily of the family (3) in Theorem \ref{thm:main}.

\section{Proof of Theorem \ref{thm:main}}\label{sec:main}
We can now quickly prove Theorem \ref{thm:main}. We note that it is straightforward to show computationally that each of the families (1)--(9) in Theorem \ref{thm:main} are filtered deformations of a polynomial ring in three variables.  To do this, we use the relations to put each of $[[x,y],z], [[x,z],y]$, and $[x,[y,z]]$ in normal form as a linear combination of monomials $x^i y^j z^k$. One then checks that the Jacobi identity holds via computer calculation. One next uses the filtered property to put a degree on $x$, $y$, $z$ that makes 
${\rm deg}([u,v]) < {\rm deg}(u)+{\rm deg}(v)$ for $u,v\in \{x,y,z\}$.  Next, using a degree-lexicographic ordering on words in $x,y,z$ with $x>y>z$, Bergman's diamond lemma (see specifically his proof of the PBW theorem) \cite{Berg} shows $xy, xz, yz$ are leading monomials of a Gr\"obner basis for the ideal generated by relations and so our algebra is indeed a filtered deformation of a polynomial ring. On the other hand, the analysis performed in studying the algebras that arise in Cases I, II, III(a), and III(b) shows that every filtered deformation of a polynomial ring is necessarily of one of these forms.  

\section{Etingof's question for three-variable polynomial rings}\label{sec:et}
We can now show that in positive characteristic a filtered deformation of a polynomial ring in three variables is PI.  To do this, we use the classification provided in Theorem \ref{thm:main}. We need a criterion that ensures certain filtered deformations are PI.  To obtain this we first prove a basic lemma.

\begin{lemma} Let $F$ be an algebraically closed field, let $A$ be an $F$-algebra having an ascending filtration by finite-dimensional $F$-vector spaces whose associated graded ring is a finitely generated graded noetherian domain, and let $D$ be the quotient division algebra of $A$.  Then all subfields $K$ of $D$ containing $F$ are finitely generated as extensions of $F$.
\label{lem:vamos}
\end{lemma}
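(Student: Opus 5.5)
The plan is to reduce the statement to Vámos's theorem. That theorem says a field extension $K/F$ is finitely generated if and only if $K\otimes_F K$ is a noetherian ring. So it suffices to realize $K\otimes_F K$ inside a noetherian ring that is faithfully flat (in fact free) over it, and to pull the noetherian property back. The natural ambient ring is $D\otimes_F D^{\rm op}$, where $D$ is the quotient division algebra of $A$.

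\textbf{Step 1: $A\otimes_F A^{\rm op}$ is noetherian.} Give $A\otimes_F A^{\rm op}$ the tensor product filtration $W_n=\sum_{i+j=n}V_i\otimes V_j$. Each $W_n$ is finite-dimensional, because each $V_i$ is. The associated graded ring is $\mathrm{gr}(A)\otimes_F \mathrm{gr}(A)^{\rm op}=\mathrm{gr}(A)\otimes_F\mathrm{gr}(A)$, since $\mathrm{gr}(A)$ is commutative. This is a tensor product of two finitely generated commutative $F$-algebras, so it is finitely generated and hence noetherian by Hilbert's basis theorem. (The hypothesis gives only a noetherian graded domain, but for a finitely generated graded domain that is automatic in the commutative situation we care about; in general I would use that it is a quotient of a finitely generated polynomial-type algebra.) The standard filtered-graded lifting argument then shows $A\otimes_F A^{\rm op}$ is left and right noetherian. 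In the same way $A$ is a noetherian domain, so $D$ exists as its Goldie/Ore quotient ring.

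\textbf{Step 2: $D\otimes_F D^{\rm op}$ is noetherian.} Let $S=A\setminus\{0\}$. I will check that $S\otimes S=\{s\otimes t\}$ is a left and right Ore set of regular elements in $A\otimes A^{\rm op}$. Common denominators can be found factorwise, because $S$ is Ore in $A$ and in $A^{\rm op}$. Regularity follows from $A\otimes A^{\rm op}\subseteq D\otimes D^{\rm op}$, where $s\otimes t$ is a unit. The corresponding localization is $D\otimes_F D^{\rm op}$, and localizations of noetherian rings at Ore sets remain noetherian, since every left ideal of the localization is the extension of its contraction. This Ore verification is the step I expect to need the most care, mainly in checking that the localization really is the full tensor product $D\otimes D^{\rm op}$.

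\textbf{Step 3: pull back to $K\otimes_F K$.} Let $K\subseteq D$ be a subfield containing $F$. Since $K$ is commutative, $K\otimes_F K\cong K\otimes_F K^{\rm op}$, and this sits inside $D\otimes_F D^{\rm op}$. By choosing bases of $D$ as a left $K$-vector space and of $D^{\rm op}$ as a right $K$-vector space, $D\otimes_F D^{\rm op}$ is a free (right) $K\otimes_F K$-module, and the basis can be chosen to contain $1$.

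Now take an ascending chain $I_1\subseteq I_2\subseteq\cdots$ of ideals of $K\otimes_F K$. Their extensions $(D\otimes D^{\rm op})I_n$ stabilize by Step 2. Freeness gives $(D\otimes D^{\rm op})I_n\cap (K\otimes K)=I_n$, by reading off coefficients on the basis element $1$, so the original chain stabilizes as well. Hence $K\otimes_F K$ is noetherian, and Vámos's theorem gives that $K$ is finitely generated over $F$.
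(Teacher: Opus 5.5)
\textbf{The gap is in Step 1.} The lemma assumes only that $\mathrm{gr}(A)$ is a finitely generated graded noetherian domain. It does not assume $\mathrm{gr}(A)$ is commutative; if commutativity were intended, both the word ``noetherian'' and the algebraic closure of $F$ would be superfluous. Your identification $\mathrm{gr}(A\otimes_F A^{\rm op})=\mathrm{gr}(A)\otimes_F\mathrm{gr}(A)^{\rm op}$ is fine. But you then need this tensor product to be noetherian, and for noncommutative algebras that does not follow. Noetherianity is not preserved under tensor products: there are finitely generated connected graded noetherian domains over algebraically closed fields that are not strongly noetherian, meaning $B\otimes_F C$ is non-noetherian for some commutative noetherian $C$. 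Your fallback, writing $\mathrm{gr}(A)$ as a quotient of a ``polynomial-type'' algebra, does not work either. In the noncommutative setting it is only a quotient of a free algebra, which is not noetherian, and being a quotient of a ring gives no control over tensor products anyway.

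So as written, your argument proves the lemma only when $\mathrm{gr}(A)$ is commutative. In that case it is correct and complete. Step 2 is fine: $\{s\otimes t\}$ is a two-sided Ore set of regular elements whose localization is all of $D\otimes_F D^{\rm op}$. Step 3 is fine: freeness over $K\otimes_F K$, with $1\otimes 1$ in a basis, gives the contraction property. In that case you do not even use that $F$ is algebraically closed, and you cover the only application of the lemma in the paper, Proposition~\ref{prop:invoke}, where $\mathrm{gr}(A)$ is a polynomial ring.

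\textbf{The missing idea is to tensor $A$ only with the field $K$, not with $A^{\rm op}$.} This is what the paper does. De Jong's argument (Artin--Small--Zhang, Theorem 5.1), which is where algebraic closure of $F$ is needed, gives that $\mathrm{gr}(A)\otimes_F K$ is noetherian. Hence $A\otimes_F K$, filtered by the $V_n\otimes_F K$, is noetherian. Its Ore localization at $\{s\otimes 1 : s\in A\setminus\{0\}\}$ is $D\otimes_F K$, which is therefore noetherian. Finally, $D\otimes_F K$ is free over $K\otimes_F K$, so the contraction argument from your Step 3 and V\'amos's theorem finish the proof. Your Steps 2 and 3 carry over essentially verbatim with $D^{\rm op}$ replaced by $K$; only the input to Step 1 has to change.
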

\begin{proof}
Let $B$ be the associated graded of $A$ and let $K$ be a subfield of $D$ containing $F$.  Then by an argument of de Jong appearing in \cite[Theorem 5.1]{ASZ}, $B\otimes_F K$ is noetherian.  In particular, since $B\otimes_F K$ is an associated graded ring of $A\otimes_F K$, $A\otimes_F K$ is noetherian.  Since $D\otimes_F K$ is a localization of $A\otimes_F K$, $D\otimes_F K$ is noetherian. Since $D$ is free over $K$, $D\otimes_F K$ is a free left and right $K\otimes_F K$-module and so a faithful flatness argument gives that $K\otimes_F K$ is noetherian.  It follows from \cite{Vam} that $K$ is finitely generated over $F$.
\end{proof}
The following result is essentially the argument from \cite{Bell}, which shows that filtered deformations of integral domains of Krull dimension two are PI.  This says that the same result holds for filtered deformations of polynomial rings in three variables when they have a non-trivial centre.  Intuitively, this makes sense, as one can invert the nonzero central elements and work over a larger base field and one then obtains a new domain of Gelfand-Kirillov dimension two that is a filtered deformation of an algebra of Krull dimension two.  The only subtleties in invoking the proof from \cite{Bell} are that it requires an algebraically closed base field and one cannot guarantee that the associated graded ring is an integral domain after extension of scalars (or even before extension for that matter!). Nevertheless, it is reasonably straightforward to get around these issues in the setting we are interested in.  

\begin{prop}
\label{prop:invoke}
Let $F$ be an algebraically closed field of positive characteristic $p>0$ and let $A$ be a filtered deformation of a three-variable polynomial ring over $F$.  If $A$ has a central element $\Theta \in A\setminus F$, then $A$ is PI.
\end{prop}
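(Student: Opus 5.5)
Since $\Theta\notin F$ is central and $A$ is a domain (its associated graded ring $F[x,y,z]$ is a domain), the subalgebra $F[\Theta]$ is a polynomial ring. Its nonzero elements form a central Ore set, so we can pass to $Q=A\otimes_{F[\Theta]}F(\Theta)$, an algebra over the field $L=F(\Theta)$. The plan is to show that $Q$, or a suitable extension of scalars of it, is a domain of GK dimension two (over the larger base field) that is a filtered deformation of a commutative algebra of Krull dimension two. Then we run the argument of \cite{Bell}, which shows that such deformations are PI. Finally, $A\subseteq Q$ inherits any polynomial identity of $Q$.

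The first step is to build the filtration on $Q$. Let $\theta$ be the image of $\Theta$ in ${\rm gr}(A)=F[x,y,z]$; it is a homogeneous polynomial of positive degree $d$. I would filter $Q$ by $L$-subspaces $W_n=L\cdot V_n$. Because $\Theta$ is central, ${\rm gr}(Q)$ should be a quotient of $F[x,y,z]/(\theta-t)\otimes L$ for a suitable parameter, or equivalently a dehomogenization. The cleaner route is Rees-algebra style: $B=F[x,y,z]$ is free over $F[\theta]$, and $A$ is free over $F[\Theta]$ with a compatible basis. Then $Q$ has an $L$-basis given by lifts of a homogeneous $F[\theta]$-basis of $B$. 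Its associated graded ring is $B\otimes_{F[\theta]}L$ with $\theta$ set to a generic value, and this has Krull dimension two. The catch, as the paper warns, is that $B/(\theta-\lambda)$ need not be a domain. Also, $L$ is not algebraically closed, and \cite{Bell} needs an algebraically closed base.

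To handle the base field, I would pass to the algebraic closure $\overline{L}$ and replace $Q$ by a simple component of $Q\otimes_L\overline{L}$ modulo a minimal prime. PI-ness ascends and descends along this, since $Q$ embeds in $Q\otimes_L \overline L$, and that algebra is PI if every prime quotient is PI with bounded degree. Here Lemma \ref{lem:vamos} is what I would use to control things. The center $Z$ of the division ring $D$ of $A$ contains $L$, and any subfield of $D$ is finitely generated over $F$. So the relevant field extensions have finite transcendence degree and finite type, the quotient division algebras behave well, and the tensor products stay noetherian, with finitely many minimal primes.

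The core of the argument, and the main obstacle, is adapting \cite{Bell} to associated graded rings that are merely of GK/Krull dimension two rather than domains. I would try to verify that the ingredients used there survive: noetherianity, finite generation, GK dimension two, and Tsen's theorem or Artin--Stafford-type arguments over an algebraically closed field. The idea is to pass to a prime quotient $P$ of $Q\otimes_L\overline L$. Then $P$ is a finitely generated prime noetherian $\overline L$-algebra of GK dimension two, and it is filtered with commutative associated graded ring. If I cannot find a ready-made replacement for the domain hypothesis, I would use the fact that ${\rm gr}(P)$ is commutative of dimension two. That gives that $P$ has a large-commutator-degree structure: $[V_m,V_n]\subseteq V_{m+n-1}$. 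Via \cite{Bell}, this prevents the quotient division ring from containing a free subalgebra or being infinite over its center, which yields PI. Taking the polynomial identity of $P$ for each of the finitely many minimal primes, we obtain a PI for $Q\otimes_L\overline L$, hence for $Q$, hence for $A$.
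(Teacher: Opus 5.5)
\paragraph{The gap.} The step you flag as the main obstacle is the whole content of the proposition, and the proposal never proves it: nothing shows that a prime quotient $P$ of $Q\otimes_L\overline{L}$ is PI. The argument of \cite{Bell} is for filtered deformations of commutative \emph{domains}, and your reduction loses that hypothesis in exactly the cases where the proposition is used.

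Note that with $W_n=LV_n$ you have $\Theta\in W_0$. So ${\rm gr}_W(Q)\cong\big(F[\bar x,\bar y,\bar z]/(\bar\Theta)\big)\otimes_F L$ is the special fibre, not the generic fibre; the generic fibre is not even graded. Suppose $\bar\Theta$ is a $p$-power of a single generator, as happens for the central elements used for families (2), (5), (7), (9) in the proof of Theorem \ref{thm:main2}. Then this ring is non-reduced. The prime quotients $P$ need not have domain associated graded rings, and their Goldie quotient rings are in general matrix rings $M_n(D)$ rather than division rings.

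Your proposed substitute is not an argument. You suggest that commutativity of ${\rm gr}(P)$, i.e.\ $[V_m,V_n]\subseteq V_{m+n-1}$, ``prevents the quotient division ring from \dots being infinite over its center.'' That property never sees $p$. Over a field $k$ of characteristic zero, $U(\mathfrak{sl}_2)$ localized at the nonzero polynomials in its Casimir element $\omega$ is a domain of GK dimension two over $k(\omega)$ with commutative associated graded ring, and it is not PI. So you would have to show that the characteristic-$p$ mechanism survives your base change and passage to prime quotients, and the proposal does not do this. The rest of your scaffolding is fine: the nilpotent prime radical, finitely many minimal primes, and $A\subseteq Q\subseteq Q\otimes_L\overline{L}$. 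But it only relocates the problem.

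\paragraph{How the paper proceeds.} The paper avoids both obstacles by never enlarging the base field and never forming an associated graded ring over $F(\Theta)$.
\begin{itemize}
\item It takes the deg-lex leading monomial $\bar x^{\alpha}\bar y^{\beta}\bar z^{\gamma}$ of $\bar\Theta$ with $\alpha>0$. Then $A$ is spanned over $F[\Theta,y]$ by the $z^kx^s$ with $s<\alpha$.
\item For the commutative subfield $K=F(\Theta,y)\subseteq{\rm Frac}(A)$, of transcendence degree two, this gives $\dim_K KV_n/KV_{n-1}=O(1)$.
\item Characteristic $p$ enters through ${\rm ad}_{z^{a'p^i}}={\rm ad}_{z^{a'}}^{p^i}$. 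With $a_i=p^N-p^{N-1}-\cdots-p^i$, the brackets $[y,z^{a_i}]$ lie in abnormally low filtration degree.
\item Via the argument of \cite[Corollary 1.6]{Bell}, this makes the centralizer $E$ of $y$ infinite-dimensional over $K$.
\item A maximal subfield $K'$ of $E$ then satisfies $[K':K]=\infty$ by \cite[Theorem 15.4]{Lam}. It is finitely generated over $F$ by Lemma \ref{lem:vamos}, hence has transcendence degree at least $3$, and \cite[Theorem 1.1]{Belldiv} yields PI.
\end{itemize}
Everything happens inside ${\rm Frac}(A)$ over the algebraically closed field $F$. No domain hypothesis on any associated graded ring is ever needed, and no algebraic closure of $F(\Theta)$ is used. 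This is the missing idea in your proposal.
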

\begin{proof} This proof follows the strategy from \cite{Bell}.  We let $x,y,z$ be generators for $A$ such that $\bar{x},\bar{y},\bar{z}$ generate the associated graded ring of $A$.  Given $u\in A$, we let $\bar{u}$ denote its image in the associated graded ring. 

Then the image of $\Theta$ in the associated graded ring of $A$ is some nonzero homogeneous polynomial $\bar{\Theta}:=h(\bar{x},\bar{y},\bar{z})$.  
We let $a,b,c$ denote the degrees of $x,y$, and $z$ respectively.  Then we can put a degree-lexicographic order on monomials in $\bar{x},\bar{y},\bar{z}$ by ordering first by total degree and then lexicographically using an order $\bar{x}>\bar{y}>\bar{z}$. 

We let $\bar{x}^{\alpha} \bar{y}^{\beta} \bar{z}^{\gamma}$ denote the degree lexicographically greatest monomial that occurs in $\bar{\Theta}$ with nonzero coefficient. After possibly permuting $x$, $y$, and $z$ we may assume that $\alpha>0$ and that this monomial is still degree lexicographically greatest.  

Then by considering images in the associated graded ring and using the fact that $\bar{x}^{\alpha} \bar{y}^{\beta} \bar{z}^{\gamma}$ is the leading monomial of a Gr\"obner basis for the ideal $(\bar{\Theta})$ in $F[\bar{x},\bar{y},\bar{z}]$, we see that $\Theta^i y^j z^k x^s$ with $i,j,k\ge 0$ and $s<\alpha$ spans $A$ as a left $F[\Theta,y]$-module.  We now let $K$ denote the subfield of ${\rm Frac}(A)$ generated by $\Theta$ and $y$.  Then since $\bar{\Theta}=h$ has monomials involving $x$, $\bar{\Theta}$ and $\bar{y}$ are algebraically independent over $F$, and so the field $K$ has transcendence degree $2$ and $KV_n$ is spanned by monomials $z^k x^s$ with $s<\alpha$ such that $k{\rm deg}(z)+s {\rm deg}(x)\le n$.  In particular, 
${\rm dim}_K KV_n/KV_{n-1}=O(1)$ since our basis for $KV_n$ can have at most $\alpha$ elements of degree exactly $n$.  By telescoping, for fixed $r\ge 1$ we have
${\rm dim}_K KV_n/KV_{n-r}=O(1)$ as $n\to\infty$.  

On the other hand, if we let $\Omega_n$ denote the $K$-span of elements $u$ in $A$ such that $[y,u]\in V_n$, then we claim that 
\begin{equation}
\limsup_{n\to \infty} {\rm dim}_K(\Omega_n) - {\rm dim}_K(KV_n) =\infty.\label{eq:inf}\end{equation}
The reason for this is that if we let $L$ be such that ${\rm deg}(y)<p^L$, then if we let $N>L$ and consider the natural numbers
$$a_N:=p^N, a_{N-1}= p^N-p^{N-1}, a_{N-2} = p^N-p^{N-1}-p^{N-2}, \ldots , a_L=p^N-p^{N-1}-\cdots - p^L,$$ then
for $i\in \{L,\ldots ,N\}$, we have $a_i= a_i' p^i$ for some integer $a_i'$ and so
$$[y,z^{a_i}] = -{\rm ad}_{z^{a_i}}(y) = -{\rm ad}_{z^{a_i'}}^{p^i}(y).$$ Since each application of ${\rm ad}_{z^{a_i'}}$ raises the degree by at most $a_i'{\rm deg}(z)-1$, we see that
 $[y,z^{a_i}]$ has degree at most $${\rm deg}(z) a_i - p^i + {\rm deg}(y) \le {\rm deg}(z)(p^N- p^{N-1}-\cdots - 2p^i) \le {\rm deg}(z)\left(p^N- \sum_{j=L-1}^{N-1} p^j\right),$$
and  so $$[y,z^{a_i}]\in \Omega_{b(N)},$$ where $b(N):={\rm deg}(z)(p^N-p^{N-1}-\cdots -p^L - p^{L-1})$.

But now this establishes (\ref{eq:inf}), since the sum $Kz^{a_L}+\cdots +Kz^{a_N}+ KV_{b(N)-{\rm deg}(y)+1}$ is direct and by construction contained in $\Omega_{b(N)}$, so
$\Omega_{b(N)}$ has dimension at least $$N-L+1+{\rm dim}_K(KV_{b(N)-{\rm deg}(y)+1}).$$  Since $N$ is arbitrary and since ${\rm dim}_K(KV_{b(N)}/KV_{b(N)-{\rm deg}(y)+1})=O(1)$ we see (\ref{eq:inf}) holds.  

Now the proof of \cite[Corollary 1.6]{Bell} shows that the centralizer, $E$, of $y$ in ${\rm Frac}(A)$ is a division ring that is infinite dimensional over $K$.  By construction $K\subseteq Z(E)$.
Let $K'$ be a maximal subfield of $E$. If $[K':K]<\infty$, then $[K':Z(E)]<\infty$ and so by \cite[Theorem 15.4]{Lam}, $[E:Z(E)]<\infty$. Thus $[E:K]=[E:Z(E)]\cdot [Z(E):K]<\infty$, which is false.  Hence
$[K':K]=\infty$. By Lemma \ref{lem:vamos}, $K'$ is finitely generated as an extension of $F$ and hence as an extension of $K$ and since $[K':K]=\infty$, $K'$ must have transcendence degree at least one over $K$. Hence the transcendence degree of $K'/F$ is strictly greater than $2={\rm trdeg}_F(K)$ and so $K'$ has transcendence degree $\ge 3$ over $F$. Now \cite[Theorem 1.1]{Belldiv} gives that $A$ is PI.
\end{proof}
We are now ready to prove Theorem \ref{thm:main2}. To do this, we show that the algebras we consider have a non-trivial centre and hence we can invoke Proposition \ref{prop:invoke}. In some families in Theorem \ref{thm:main} (particularly families (3) and (4)), we find centres in part via computer search using Sage Math (code can be found in the appendix of the Master's thesis of the second-named author \cite{Li}).  
We prove the existence of non-trivial central elements for these algebras in the following lemmas.  We first need a lemma that allows us to find a non-trivial centre in characteristic zero to deduce the existence of a non-trivial centre in positive characteristic.
\begin{lemma}\label{lem:lift}
Let $R$ be a finitely generated commutative $\mathbb{Z}$-algebra which is an integral domain,
let $F=\operatorname{Frac}(R)$ be its field of fractions, and let $\theta: R\to K$ be a ring homomorphism from $R$ to a field $K$, so that we view $K$ as an $R$-module.  Let $A$ be an associative
$R$-algebra equipped with an ascending filtration
$$R=A_0\subseteq A_1\subseteq A_2\subseteq\cdots,\qquad A=\bigcup_{N\ge 0}A_N,$$
such that
\begin{enumerate}
\item each $A_N$ is a finitely generated free $R$-module and each quotient
      $A_N/A_{N-1}$ is a free $R$-module;
\item $A_M A_N\subseteq A_{M+N}$ for all $M,N\ge 0$; and
\item $A$ is generated as an $R$-algebra by finitely many elements
      $g_1,\dots,g_r$, all lying in $A_d$ for some fixed $d$.
\end{enumerate}
If $A\otimes_R F$ has  a central element not contained in $F\cdot 1$, then $A\otimes_R K$ has  a central element not contained in $K\cdot 1$.
\end{lemma}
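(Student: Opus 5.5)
The plan is to reduce centrality to a finite-dimensional linear algebra problem over $R$, defined by a matrix with entries in $R$, and then use that a rank bound given by vanishing minors survives specialization along $\theta$.

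\emph{Step 1: produce a central element of $A$ itself.} Since each $A_N$ is free over $R$ and $A$ is their union, $A$ is torsion-free over $R$, so $A$ embeds in $A\otimes_R F$ and every element of $A\otimes_R F$ has the form $r^{-1}w$ with $w\in A$ and $0\neq r\in R$. Take a central element $\zeta\in A\otimes_R F$ with $\zeta\notin F\cdot 1$, and write $\zeta=r^{-1}w$. Then $w=r\zeta$ is central in $A\otimes_R F$, hence in $A$, and $w\notin F\cdot 1$. Fix $N$ with $w\in A_N$.

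\emph{Step 2: set up the linear map.} Since $A$ is generated by $g_1,\dots,g_r$, an element is central exactly when it commutes with every $g_i$; this remains true after any base change, because the images of the $g_i$ generate $A\otimes_R K$. Consider the $R$-linear map
$$\varphi\colon A_N\to (A_{N+d})^{r},\qquad u\mapsto ([u,g_1],\dots,[u,g_r]),$$
which is well defined by hypothesis (2). Choose $R$-bases of the free modules $A_N$ and $A_{N+d}$, so that $\varphi$ is given by a matrix $\Phi$ with entries in $R$.

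\emph{Step 3: compare kernels over $F$ and over $K$.} Over $F$, the kernel of $\Phi$ contains the independent vectors $1$ and $w$, so it has dimension at least $2$. Hence $\operatorname{rank}_F\Phi\le n-2$, where $n=\operatorname{rank}_R A_N$. Equivalently, all minors of $\Phi$ of size $n-1$ vanish in $R$, and therefore their images under $\theta$ vanish. So $\operatorname{rank}_K\theta(\Phi)\le n-2$, and the kernel of $\varphi\otimes K$ on $A_N\otimes_R K\cong K^n$ has dimension at least $2$. It therefore contains an element $v$ outside the line spanned by $1$, since $1$ lies in the kernel.

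\emph{Step 4: interpret $v$ inside $A\otimes_R K$.} This is the step that needs care, and I expect it to be the main obstacle: one must ensure that the computation in $A_N\otimes K$ and $A_{N+d}\otimes K$ faithfully reflects $A\otimes_R K$. Because every quotient $A_{M}/A_{M-1}$ is free, each inclusion $A_{M-1}\subseteq A_M$ is split. Hence $A_N$ is a direct summand of $A_M$ for $M\ge N$, and also of $A$ (its complement is the union of compatible complements). So $A_N\otimes K\to A\otimes K$ and $A_{N+d}\otimes K\to A\otimes K$ are injective, and $A_0\otimes K=K$ embeds as $K\cdot 1$.

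Consequently the image of $v$ in $A\otimes_R K$ commutes with each generator $g_i\otimes 1$, and is therefore central. Moreover it is not in $K\cdot 1$: the map $A_N\otimes K\to A\otimes K$ is injective and $v\notin K\cdot 1$ inside $A_N\otimes K$. This proves the lemma.
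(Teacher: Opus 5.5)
Your proposal is correct and follows the paper's proof: you use the same $R$-linear map $u\mapsto([u,g_1],\dots,[u,g_r])$ from $A_N$ to $(A_{N+d})^r$, observe that its $(n-1)\times(n-1)$ minors vanish in $R$, and note that this vanishing survives applying $\theta$. Your Steps 1 and 4 make explicit two points the paper leaves tacit. Step 1 clears denominators to obtain a central element of $A$ itself, and Step 4 uses the splitting of the filtration to show that $A_N\otimes_R K$ embeds in $A\otimes_R K$, so that the kernel element is genuinely a non-scalar central element there.
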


\begin{proof}
Since the $g_i$ generate $A$ as an $R$-algebra, an element $a\in A$ is central if and only if
$[a,g_i]=0$ for $i=1,\dots,r$; the same condition holds for $A\otimes_R K$, since the
images $g_i\otimes 1$ generate $A\otimes_R K$ as a $K$-algebra.  Now suppose $a\in A_N\setminus F\cdot 1$ is central.  For $b\in A_N$ we have
$[b,g_i]\in A_{N+d}$ by assumption $(2)$, so there is an $R$-linear map of
finitely generated free $R$-modules
$\Phi\colon A_N\longrightarrow (A_{N+d})^{r}$ given by 
$b\mapsto\bigl([b,g_1],\dots,[b,g_r]\bigr)$.
By construction
$Z(A)\cap A_N=\ker\Phi$. 

Fix $R$-bases of $A_N$ and of $(A_{N+d})^{r}$, and let $q$ and $p$ denote the sizes of the respective bases. We can then represent $\Phi$ as a $p\times q$ matrix $Y$ with entries in $R$, and we regard it as a matrix over $F$.  Let
$\rho:=\operatorname{rank}_F(Y)$.  Since $1,a\in {\rm ker}(\Phi)$ and they are linearly independent over $F$, $\rho\le q-2$.  Hence every $(q-1)\times(q-1)$ minor
of $Y$ vanishes. If we let $\theta(Y)\in M_{p\times q}(K)$ denote the matrix obtained by applying $\theta$ entry-wise, we then see that every $(q-1)\times (q-1)$ minor of $\theta(Y)$ vanishes and so $\theta(Y)$ has rank at most $q-2$ as a matrix over $K$.
Consequently
$$\dim_K\ker\!\bigl(\Phi\otimes {\rm Id}_K)
   \ge 2$$
   and so we see that $A\otimes_R K$ necessarily has a non-scalar central element in $A_N\otimes_R K$. 

\end{proof}

\begin{lemma}\label{lem:34}
Let $K$ be a field and let $A$ be an algebra from either family (3) or (4) from Theorem \ref{thm:main}.  Then there exists a central element of $A$ that is not in $K$.
\end{lemma}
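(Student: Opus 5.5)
The plan is to produce, for each of families (3) and (4), an explicit non-scalar central element $\Theta$ whose coefficients are polynomials over $\mathbb{Z}$ in the structure constants, including the coefficients of $u(x)$. Then to verify $[\Theta,x]=[\Theta,y]=0$ in normal form. This works uniformly over every field $K$, without going through characteristic zero at all.

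If instead only a characteristic-zero formula is available, I would fall back on Lemma \ref{lem:lift}. For this I would take $R$ to be the polynomial ring over $\mathbb{Z}$ in the structure constants, with appropriate localizations; for instance, in family (3) with $\delta\neq 0,\ \beta=\delta^{-1}$, adjoin $\delta^{-1}$. The generic algebra $A_R$ is then free over $R$ on the PBW monomials $x^iy^jz^k$, by the Gr\"obner/diamond-lemma argument of Section \ref{sec:main}, and it is filtered by weighted degree. Its specializations $\theta\colon R\to K$ are exactly the algebras of the family. Lemma \ref{lem:lift} then reduces the problem to the generic algebra over $\operatorname{Frac}(R)$, which has characteristic zero.

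To find $\Theta$, I would work with the associated graded Poisson structure. For family (4) with weights $\deg x=1$, $\deg y=\deg z=2$ (the relations are compatible with this), the top part of the brackets is
$$\{x,y\}=z,\qquad \{x,z\}=-2xy,\qquad \{y,z\}=y^2+(\text{top of }u).$$
I would look for a Casimir of this Poisson bracket, such as a quadratic form in $y$ and $z$ corrected by a polynomial in $x$; the relation $\{x,z\}=-2xy$ suggests something like $z^2+2xy^2+\dots$ or its analogue. Then I would lift it order by order by adding lower-degree PBW terms, solving the linear conditions $[\Theta,x]=[\Theta,y]=0$ degree by degree. This is exactly the kind of finite linear system the authors solved in Sage. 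Family (3) would be handled the same way, treating separately each of the three normalizations $\delta=0$; $\delta\ne0,\beta=\delta^{-1}$; and $\delta\ne 0,\beta=0$. In each case the guessed leading form of $\Theta$ is dictated by the top-degree part of $[x,z]$: the $\beta x^3+\delta y^2$ terms, or just $\delta y^2$.

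The main obstacle is the existence and explicit shape of the central element. I expect its degree to depend on $\deg u$, so the ansatz must allow high enough degree, and the correction terms in the lifting may involve denominators. Denominators such as $2$ or $3$ are exactly why I would route through Lemma \ref{lem:lift} with a suitable localization of $\mathbb{Z}$, rather than insist on a universal integral formula. The separate argument needed is for characteristics $2$ and $3$, where those primes are inverted in $R$ and so $\theta$ cannot reach $K$. There I would either check the formula directly, or note that the family degenerates; in characteristic $2$, family (4) is itself of type (8) or (9) after normalization. In that case I would instead exhibit a $p$-power-type central element directly, for example suitable $p$-th powers such as $x^p-\dots$.
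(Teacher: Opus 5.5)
There is a genuine gap: you never show that a non-scalar central element exists, and that is the whole content of the lemma. Computing a Casimir of the associated graded Poisson bracket and ``lifting it order by order'' is a search procedure, not a proof. A Casimir need not lift through a filtered deformation, and you give no reason why each successive linear system is solvable.

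For family (3) the paper closes this by exhibiting an explicit element $C=C_6+\cdots+C_1$ (with $C_6=20\delta y^3-30z^2$) that is checked directly. You would at least have to produce and verify such an element.

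For family (4) the problem is worse. Here $u(x)$ has arbitrary degree, so no fixed-degree computation covers all cases, and a uniform argument is needed. Your guessed leading form is right: $z^2+2xy^2$ is a Casimir of $\{x,y\}=z$, $\{x,z\}=-2xy$, $\{y,z\}=y^2$, and the paper's $\tilde C=z^2+2xy^2-2yz+2y^2-2\zeta y$ commutes with $x$. The missing step is finding $f(x)$ with $[\tilde C+f(x),y]=0$. Since $[\tilde C,y]=[-u(x),y]-(u(x)z+zu(x))$, this comes down to showing $xyx^n-x^nyx\in[F[x],y]$ for every $n$, where $F=\operatorname{Frac}(R)$. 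The paper does this as follows:
\begin{itemize}
\item it lets $F[s,t]$ act by left and right multiplication by $x$, and notes that $(s-t)\bigl((s-t)^2+s+t\bigr)$ annihilates $y$;
\item it parametrizes the conic $(s-t)^2+s+t=0$ by $s=(T-T^2)/2$, $t=(-T-T^2)/2$;
\item the involution $T\mapsto -T$ swaps $s$ and $t$, so the differences $q(s)-q(t)$ map onto the odd polynomials in $T$, and $st^n-s^nt$ maps to an odd polynomial.
\end{itemize}
Nothing in your plan plays this role.

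You also misread what Lemma \ref{lem:lift} requires. Its proof only uses that the $(q-1)\times(q-1)$ minors of the $R$-matrix of $b\mapsto([b,g_1],\dots,[b,g_r])$ vanish, and this persists under every specialization $\theta$. So the central element over $\operatorname{Frac}(R)$ may have arbitrary denominators, and no prime of $\mathbb{Z}$ needs to be inverted. With $R=\mathbb{Z}[t_0,\dots,t_n,s]$, as in the paper, fields of characteristic $2$ and $3$ are reached too. Your separate treatment of those characteristics is therefore unnecessary. That is fortunate, since ``check the formula directly, or note that the family degenerates'' is not an argument. Also, family (4) in characteristic $2$ with $\deg u>4$ is not literally in (8) or (9).

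Conversely, an integral formula does not by itself give a non-scalar element over every field, since it can reduce to a scalar or to $0$ modulo a small prime. For instance, every coefficient of the paper's $C$ for family (3) is divisible by $5$ when $c_4=0$. Passing through Lemma \ref{lem:lift} with $R$ a polynomial ring over $\mathbb{Z}$ avoids this.

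A minor slip: the weights $\deg x=1$, $\deg y=\deg z=2$ are incompatible with family (4), since the term $xy$ in $[x,z]$ has degree $3>\deg x+\deg z-1$. The relations force $\deg y+1\le\deg z\le\deg x+\deg y-1$, hence $\deg x\ge 2$. The weights $(2,b,b+1)$ with $b\ge\max(1,\deg u)$ work, and for them your displayed top brackets are correct.
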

\begin{proof}
For the algebras from family (3), we write $u(x)=c_0+c_1x+c_2x^2+c_3x^3+c_4x^4$.  Using Sage Math,\footnote{The element was found by solving for a lift of Poisson central elements for the corresponding Poisson algebras.} we always have a central element
$C:=C_6+C_5+C_4+C_3+C_2+C_1$ with
$$C_6=20\delta y^3-30z^2, \quad C_5= -12c_4x^5-20Ax^3y+30Ax^2z-20\delta Axy^2+10\delta A yz,$$
\begin{align*} C_4 &=-\big(8\delta^2c_4^2-3\beta\delta c_4+15c_3\big)x^4-10\big(18\beta^2\delta-11\beta\delta^2c_4+3\delta c_3-6\gamma_2\big)x^2y\\
&+30\big(5\beta^2\delta-3\beta\delta^2c_4+\delta c_3-2\gamma_2\big)xz-10\delta\big(4\beta^2\delta-\beta\delta^2c_4+\delta c_3-4\gamma_2\big)y^2,\end{align*}
$$C_3=-\big(8\delta^2c_3c_4-21\beta^2\delta^2c_4-35\beta\delta c_3+56\delta\gamma_2c_4+20c_2\big)x^3-20Bxy+10Bz,$$
\begin{align*} C_2 &=-\big(8\delta^2c_2c_4-3\beta^3\delta^3c_4-5\beta^2\delta^2c_3-7\beta\delta^2\gamma_2c_4-40\beta\delta c_2\\
&+54\delta\gamma_1c_4+15\delta\gamma_2c_3+30c_1\big)x^2 +10\big(\delta^2\gamma_1c_4-\delta c_1+6\gamma_0\big)y\end{align*}
$$C_1=-\big(8\delta^2c_1c_4-9\beta\delta^2\gamma_1c_4-30\beta\delta c_1+48\delta\gamma_0c_4+15\delta\gamma_1c_3+60c_0\big)x$$

where $A:=2\delta c_4-3\beta$ and $$B:=-3\beta^3\delta^2+2\beta^2\delta^3c_4-\beta\delta^2c_3+3\beta\delta\gamma_2-\delta^2\gamma_2c_4+\delta c_2-3\gamma_1.$$

For the algebras from family (4), we have relations
$$[x,y]=z, [x,z]= z -2 xy +\zeta, [y,z]= y^2 +u(x),$$
with $u(x)\in K[x]$.  We let $n$ denote the degree of $u(x)$.  We then create commuting indeterminates $t_0,\ldots ,t_n, s$. We let
$R=\mathbb{Z}[t_0,\ldots ,t_n,s]$, $F={\rm Frac}(R)$, and let $\theta: R\to K$ be the ring homomorphism given by sending $s$ to $\zeta$ and $t_i$ to the coefficient of $x^i$ in $u(x)$. We now consider the $R$-algebra $A$ with generators $x,y,z$ and relations
$[x,y]=z, [x,z]=z-2xy+s, [y,z] = y^2+t_0 + \cdots +t_n x^n$, then an application of Bergman's diamond lemma \cite{Berg}, using the fact that the Jacobi identity holds, gives that $A$ is a free $R$-algebra with basis $x^i y^j z^k$.  Then by Lemma \ref{lem:lift} it suffices to show that $A\otimes_R F$ has a non-trivial central element.

So we now work with the algebra $A\otimes_R F$ and the field $F$.  We note that ${\rm ad}_x$ preserves the left $F[x]$-submodule of our algebra $F[x]+F[x]z+F[x]y$.  From here we can produce a non-trivial element $$\tilde{C}:=z^2+ 2xy^2 - 2yz+  2y^2 -2s y$$ that commutes with $x$. We found this element using Sage Math.

Now we claim there is a polynomial $f(x)\in F[x]$ such that $\tilde{C}+f(x)$ is central.  Since our algebras are generated by $x$ and $y$ and since $\tilde{C}+f(x)$ commutes with $x$, it suffices to find $f(x)$ such that $[\tilde{C}+f(x),y]=0$.

Then \begin{align*}
[\tilde{C},y] &= -(y^2 + u(x)) z - z(y^2+u(x)) +2zy^2 + 2y(y^2+u(x)) \\
&=-y^2z+zy^2 -u(x)z - zu(x) +2y^3 +2y u(x).
\end{align*}
Since $y^2z-zy^2=[y^2,z] = [y,z] y + y[y,z] = 2y^3 + y u(x)+u(x)y$, we see that 
$[\tilde{C},y] = yu(x)-u(x)y -u(x)z - zu(x) = [-u(x),y] - u(x) z - zu(x)$. 
 Thus it suffices to show that $u(x)z+zu(x) = [g(x),y]$ for some polynomial $g(x)$.  Furthermore, since $z=[x,y]$ we can write $u(x)z+zu(x)$ as
 $u(x) xy - u(x)yx + xy u(x) - y x u(x) = [u(x)x ,y] + x yu(x)-  u(x)yx$.
 Thus it suffices to show there is some polynomial $h(x)$ such that $[h(x),y] = x y u(x)- u(x)y x$.  By linearity, it suffices to do this when $u(x)=x^n$ with $n\ge 0$ and so we must show that for each $n\ge 0$ there is a polynomial $q_n(x)$ such that $[q_n(x),y] = xyx^n- x^n yx$.

To do this, observe that our algebra is a left $F[s,t]$-module via the action $s\cdot a = x \cdot a, t\cdot a= a\cdot x$ for $a$ in our algebra.  Then let $I$ be the ideal of $F[s,t]$ consisting of all elements that annihilate $y$.  Since 
$[x,[x,y]] = z- 2xy+\zeta = [x,y]- 2xy+\zeta$, we see that $(s-t)((s-t)^2 -(s-t) + 2s)\in I$.  Then $I$ contains the ideal generated by $(s-t)((s-t)^2+s+t)$.
We let $J$ denote the ideal generated by $(s-t)^2+s+t$.

Our goal is now to show that for each $n\ge 0$ there is a polynomial $q_n$ such that
$q_n(s)-q_n(t) - st^n + s^nt\in I$.  Since $q_n(s)-q_n(t)-st^n+s^n t$ is necessarily divisible by $(s-t)$ and since $(s-t)$ and $(s-t)^2+s+t$ are coprime, it suffices to show that for each $n\ge 0$ there is a polynomial $q_n$ such that
$q_n(s)-q_n(t) - st^n + s^nt\in J$. 

Now in characteristic zero, the curve $(s-t)^2+s+t=0$ is irreducible of genus $0$ and can be parametrized by $s=(T-T^2)/2, t=(-T-T^2)/2$, with $T=s-t$ and we have a $F$-algebra isomorphism $\Psi: F[s,t]/J\to F[T]$ given by the parametrization just given. Now $F[T]$ has a $F$-algebra automorphism $\sigma$ of order $2$ given by $T\mapsto -T$, and this automorphism induces an automorphism $\tilde{\sigma}$ of $F[s,t]/J$ that interchanges the images of $s$ and $t$.  Then we claim that 
$S=\Psi(\{q(s)-q(t) \colon q(\theta)\in F[\theta]\})$ is precisely the set of odd polynomials in $T$.  To see this, observe that $q(s)-q(t)$ is sent to its negative under $\tilde{\sigma}$ and so every element of $S$ is sent to its negative under $\sigma$ and hence is odd. On the other hand, $\Psi(s^n-t^n)$ is a degree $2n-1$ polynomial with leading coefficient $2^{-n+1}n$ and so $S$ contains a basis for the odd polynomials in $T$.  Thus to obtain the result, it suffices to note that $st^n-s^n t = st (t^{n-1}-s^{n-1})$ is sent to its negative by $\tilde{\sigma}$ and hence has odd image in $F[T]$. Thus we have proven the existence of $f(x)$ such that $[\tilde{C}+f(x),y]=[\tilde{C}+f(x),x]=0$ and by construction we in fact have $f(x)\in \mathbb{Q}[t_0,\ldots ,t_n][x]$.  Thus we have proved the existence of a non-trivial central element for algebras in family (4).
\end{proof}
\begin{proof}[Proof of Theorem \ref{thm:main2}]

The families of type (1) are iterated Ore extensions.  A unipotent automorphism has order a power of $p$ when the base field has characteristic $p>0$ and so the fact that these algebras are PI follows from work of Brown and Zhang \cite[Theorem 3.3]{BZ}. 

For the remaining families, in light of Proposition \ref{prop:invoke}, it suffices to show that each family from (2)--(9) has a non-trivial centre in characteristic $p>0$.
For the algebras from family (2) with generators $x,y,z$ and relations $[x,y]=z, \qquad [x,z]=\gamma y, \qquad [y,z]=f(x)$ for some $\gamma\in K$ and some $f(x)\in K[x]$, we have
${\rm ad}_x^n(y) = \gamma^{(n-1)/2} z$, and ${\rm ad}_x^n(z)=\gamma^{(n+1)/2}y$ if $n$ is odd; and ${\rm ad}_x^n(y)=\gamma^{n/2} y, {\rm ad}_x^n(z)=\gamma^{n/2} z$ if $n$ is even.
In particular, since ${\rm ad}_x^{p^i} = {\rm ad}_{x^{p^i}}$ we see that for all fields of characteristic $p>0$, $x^{p^2}-\gamma^{(p^2-p)/2} x^p$ is central.  Thus we obtain the desired result for the algebras in (2).

Families (3) and (4) have a non-trivial centre by Lemma \ref{lem:34} and thus these families satisfy polynomial identities in positive characteristic.

Families (5), (7), and (9) have relations of the form $[x,y]=z$ and either $[y,z]=c z$ or $[x,z]=cz$ and so either ${\rm ad}_y^n(x) = -c^{n-1} z$ for all $n\ge 1$ or ${\rm ad}_x^n(y)=c^{n-1}z$ for $n\ge 1$, and so in these algebras we have either $y^p - c^{p-1} y$ or $x^p-c^{p-1}x$ is central. 

For family (6) we have characteristic $2$. When $\alpha=0$ we have relations
$$[x,y]=z, \quad [x,z]= \beta y +  \gamma_2 x^2 + \gamma_1 x + \gamma_0 +\delta y^2,\quad [y,z]=u(x)+\gamma_1y +\gamma_2 z,$$
It follows that ${\rm ad}_x$ preserves the $K$-vector space spanned by $1, x, x^2, x^3, x^4, y, z, y^2$.  Since the space of $K$-linear endomorphisms of $V$ is $64$-dimensional, since ${\rm ad}_x^{p^i}={\rm ad}_{x^{p^i}}$ for $i\ge 1$, it follows that there is a non-trivial $K$-linear dependence between 
${\rm ad}_x, {\rm ad}_{x^p}, \ldots , {\rm ad}_{x^{p^{64}}}$ when we restrict the action to $V$.  Consequently there is a non-trivial polynomial in $x$ that commutes with $y$ and $z$ and hence is central. When $\alpha=1$, we have the relations
$$[x,y]=z, [x,z]=z + \beta y + (\delta^{-1} x^3+ \gamma_2 x^2 + \gamma_1 x + \gamma_0 ) +\delta y^2,$$
and
$$[y,z] = u(x) + (\delta^{-1} x^2+\gamma_1+\beta \delta^{-1})y + (\delta^{-1} x+\gamma_2+\delta^{-1})z.$$
Then we see that the $K(x)$-vector subspace of the quotient division algebra of our ring that is spanned by $1, y, z, y^2$ is invariant under ${\rm ad}_x$ and moreover, we have
$${\rm ad}_x(1)=0, {\rm ad}_x(y)=z, {\rm ad}_x(z)= z + \beta y + (\delta^{-1} x^3+ \gamma_2 x^2 + \gamma_1 x + \gamma_0 ) +\delta y^2$$
and $${\rm ad}_x(y^2) = {\rm ad}_y(z)=u(x) + (\delta^{-1} x^2+\gamma_1+\beta \delta^{-1})y + (\delta^{-1} x+\gamma_2+\delta^{-1})z.$$
Then if we use this fact we can verify that $$h(x)=x^4 + x^3 + (1+\beta+\delta\gamma_2)x^2 + (\beta+\delta\gamma_1)x$$ is central and so we are done.

Finally for family (8), ${\rm ad}_x$ preserves the $K$ vector space spanned by $1,x,x^2,x^3,y,z$ and so we obtain a centre using the argument employed in handling case (6), but now using invariance of a $6$-dimensional space to produce a central polynomial in $x$.
\end{proof}


 \end{document}